\numberwithin{equation}{section} 
\newcommand{\N}{\ensuremath{\mathbb{N}}}
\newcommand{\Z}{\ensuremath{\mathbb{Z}}}
\newcommand{\R}{\ensuremath{\mathbb{R}}}
\newcommand{\T}{\ensuremath{\mathbb{T}}}
\newcommand{\C}{\ensuremath{\mathbb{C}}}
\newcommand{\eps}{\ensuremath{\varepsilon}}
\newcommand{\supp}{\textup{supp}}
\newcommand{\Ec}{\ensuremath{\mathcal{E}}}
\newcommand{\Fc}{\ensuremath{\mathcal{F}}}
\newcommand{\Lc}{\ensuremath{\mathcal{L}}}
\newcommand{\Oc}{\ensuremath{\mathcal{O}}}
\begin{document}
\thispagestyle{plain}

\topmargin -18pt\headheight 12pt\headsep 25pt

\ifx\cs\documentclass \footheight 12pt \fi \footskip 30pt

\textheight 625pt\textwidth 431pt\columnsep 10pt\columnseprule 0pt

 \renewcommand{\headfont}{\slshape}      
 \renewcommand{\pnumfont}{\upshape}      
\setcounter{secnumdepth}{5}             
\setcounter{tocdepth}{5}             

\newtheorem{Definition}{Definition}[section]
\newtheorem{Satz}[Definition]{Satz}
\newtheorem{Lemma}[Definition]{Lemma}
\newtheorem{Korollar}[Definition]{Korollar}
\newtheorem{Corollary}[Definition]{Corollary}
\newtheorem{Bemerkung}[Definition]{Bemerkung}
\newtheorem{Remark}[Definition]{Remark}
\newtheorem{Proposition}[Definition]{Proposition}
\newtheorem{Beispiel}[Definition]{Beispiel}
\newtheorem{Theorem}[Definition]{Theorem}



\pdfbookmark[1]{Titlepage}{title}
\begin{center}
	{\Large Convergence of the Nonlocal Allen-Cahn Equation to Mean Curvature Flow\\[2ex]
	\textsc{Helmut Abels, Christoph Hurm, Maximilian Moser}\\[1ex]
}
\end{center}

\begin{abstract}
	\noindent\textbf{Abstract.} 
        We prove convergence of the nonlocal Allen-Cahn equation to mean curvature flow in the sharp interface limit, in the situation when the parameter corresponding to the kernel goes to zero fast enough with respect to the diffuse interface thickness. The analysis is done in the case of a $W^{1,1}$-kernel, under periodic boundary conditions and in both two and three space dimensions. We use the approximate solution and spectral estimate from the local case, and combine the latter with an $L^2$-estimate for the difference of the nonlocal operator and the negative Laplacian from Abels, Hurm \cite{AbelsHurm}. To this end, we prove a nonlocal Ehrling-type inequality to show uniform $H^3$-estimates for the nonlocal solutions.\\
        
        
    \noindent\textit{2020 Mathematics Subject Classification:} Primary 45K05; Secondary 35B40, 35B10, 35K61.\\
	\textit{Keywords:} nonlocal Allen-Cahn equation, diffuse interface model, sharp interface limit, mean curvature flow.
\end{abstract}

\pdfbookmark[1]{Table of Contents}{toc}

\section{Introduction}\label{sec_intro}
Let $\T^n$, $n\in\{2,3\}$, be the $n$-dimensional torus and $T>0$ be fixed. We consider the following nonlocal Allen-Cahn equation,
\begin{alignat}{2}
    \partial_tc_\eps + \Lc_\eta c_\eps + \frac{1}{\eps^2}f^\prime(c_\eps) &= 0 &\quad & \text{ in }\T^n\times(0,T), \label{eq_ACnonloc_1}\\
    c_\eps|_{t=0} &=  c_{0,\eps}& \quad & \text{ in }\T^n. \label{eq_ACnonloc_2}
\end{alignat}
Here, $c_\eps: \T^n\times (0,T) \rightarrow\R$ is an order parameter representing the relative difference of the two phases and $f:\R\rightarrow\R$ is a double well potential with wells of equal depth and minima at $\pm 1$. Moreover, the parameter $\eps>0$ is related to the thickness of the diffuse interface separating the two phases. Finally, for $\eta>0$ the nonlocal operator is given by
\begin{align}\label{eq_nonlocal_operator}
		\Lc_\eta \varphi(x) := &\int_{\T^n}\tilde{J}_\eta(x-y)\left(\varphi(x)-\varphi(y)\right)\,dy \\
        = &\int_{\R^n}J_\eta(x-y)\left(\varphi(x)-\varphi(y)\right)\,dy \nonumber
        \quad\text{for all }x\in\T^n,
\end{align}
for integrable $\varphi:\T^n \rightarrow\R$ and suitable interaction kernels $J_\eta: \R^n \rightarrow\R$, cf. \ref{ass_2} below. Here, $\tilde{J}_\eta: \T^n\rightarrow\R$ denotes a $2\pi$-periodic extension of $J_\eta$.
In this setting, the corresponding bilinear form 
\begin{align}\label{eq_E_eta}
		\Ec_\eta(\varphi) := \frac{1}{4}\int_{\T^n}\int_{\T^n}\tilde{J}_\eta(x-y)\big|\varphi(x) - \varphi(y)\big|^2\,\text{d}y\,\text{d}x 
\end{align}
satisfies the following variational convergence
\begin{align}\label{Conv_BF}
    \Ec_\eta(\varphi) \rightarrow \frac{1}{2}\int_{\T^n}|\nabla\varphi|^2\;\text{d}x
\end{align}
as $\eta\rightarrow 0$ for all $\varphi\in H^1(\T^n)$. For details, we refer to the results by Ponce \cite{Ponce1, Ponce2}.

Based on \eqref{Conv_BF}, the nonlocal-to-local convergence $\mathcal{L}_\eta\varphi \rightarrow -\Delta\varphi$ as $\eta\rightarrow 0$ as well as corresponding results about the nonlocal-to-local convergence (without rates) of the Cahn-Hilliard equation have been shown in \cite{DavoliRanetbauerScarpaTrussardi,DavoliRoccaScarpaTrussardi,DavoliScarpaTrussardi,DavoliScarpaTrussardi1,Elbar}. Recently, the authors in \cite{AbelsHurm} established rates of convergence for the nonlocal operator $\mathcal{L}_\eta$, see Theorem \ref{th_AbelsHurm} below. This result has then been used in \cite{AbelsHurm} to prove the nonlocal-to-local limit with rates of convergence for the Cahn-Hilliard equation and the Allen-Cahn equation as well as in \cite{HKP, HM1} for coupled Cahn-Hilliard systems. 

In particular, solutions to the nonlocal Allen-Cahn equation converge to the solution to the corresponding local Allen-Cahn equation,
\begin{alignat}{2}
    \partial_tc_\eps -\Delta c_\eps + \frac{1}{\eps^2}f^\prime(c_\eps) &= 0 &\quad & \text{ in }\T^n\times(0,T), \label{eq_ACloc_1}\\
    c_\eps|_{t=0} &=  c_{\eps,0}& \quad & \text{ in }\T^n. \label{eq_ACloc_2}
\end{alignat}
It is well-known that in the sharp interface limit $\eps\rightarrow 0$ the Allen-Cahn equation \eqref{eq_ACloc_1}--\eqref{eq_ACloc_2} converges towards mean curvature flow.
This has first been analyzed in \cite{deMS} and \cite{Chen} using the method of matched asymptotic expansions as long as there is a smooth solution of the mean curvature flow equation. Moreover, Ilmanen \cite{Ilmanen} showed convergence globally in time to a Brakke solution of the mean curvature flow. On account of stability of the linearized Allen-Cahn operator, one may also derive rates of convergence. An alternative approach has been used by the authors in \cite{FLS}. Therein, the relative entropy method has been applied to prove the sharp interface asymptotics.  

We are interested in the sharp interface limit for the nonlocal Allen-Cahn equation, i.e.~we intend to analyse the limit $\eps\rightarrow 0$ for initial data close to a diffuse interface configuration. This is a very difficult task in general, therefore we restrict ourselves to the case when $\eta$ is very small compared $\eps$ (see Theorem \ref{th_conv} below for a quantification of the latter). The idea is to enforce closeness to the local situation where the sharp interface limit is well-known. Indeed, we use the approximate solution and the spectral estimate from the local case, cf.~Section \ref{sec_local} below, and combine the latter with an estimate for $\|\Lc_\eta c_\eps+\Delta c_\eps\|_{L^2(\T^n)}$ from Abels, Hurm \cite{AbelsHurm}, cf.~Theorem \ref{th_AbelsHurm} below. In order to close the Gronwall argument, it turns out that we need to control the $H^3$-norm of the nonlocal solutions uniformly. The latter is done in Section \ref{sec_nonlocAC_bound} with a novel nonlocal Ehrling inequality, cf.~Theorem \ref{th_nonloc_ehrling} below. 

To the best of our knowledge this is the first rigorous sharp interface limit result in the nonlocal case. The main result of this contribution is the following: 

\begin{Theorem}[\textbf{Convergence}]\label{th_conv}
    Let $n\in\{2,3\}$, $T_0>0$ and $(\Gamma_t)_{t\in[0,T_0]}$ with $\Gamma_t\subset(-\pi,\pi)^n$ for all $t\in[0,T_0]$ be a smoothly evolving compact closed hypersurface in $\T^n$ satisfying mean curvature flow, i.e.~$V_{\Gamma_t}=H_{\Gamma_t}$ for all $t\in[0,T_0]$, where $V_{\Gamma_t}$ is the normal velocity and $H_{\Gamma_t}$ the mean curvature of $\Gamma_t$. We use the usual notation for $\Gamma, \Omega^\pm$, the tubular neighbourhood $\Gamma(\delta)$, $\delta>0$ small, and for tangential and normal derivatives $\nabla_\tau$ and $\partial_n$, cf.~Section \ref{sec_local} below. Finally, let $M\in\N$. 

    Then there exists an $\eps_0>0$ and $c^A_\eps:\T^n\times[0,T_0]\rightarrow\R$ smooth for $\eps\in(0,\eps_0]$ such that $\lim_{\eps\rightarrow 0}c^A_\eps=\pm 1$ uniformly on compact subsets of $\Omega^\pm$ such that the following holds:
    \begin{enumerate}
        \item  Let $M\geq 3$ and consider initial values $c_{0,\eps}\in H^3(\T^n)$, $\eps\in(0,\eps_0]$ for \eqref{eq_ACnonloc_1}-\eqref{eq_ACnonloc_2} with 
        \begin{align*}
        \sup_{\eps\in(0,\eps_0]}\|c_{0,\eps}\|_{\infty}<\infty\quad\text{ and }\quad\|c_{0,\eps}-c^A_\eps|_{t=0}\|_{L^2(\T^n)}\leq R\eps^{M+\frac{1}{2}},\quad R>0\text{ fixed},\\
        \|c_{0,\eps}\|_{H^1(\T^n)}\leq\frac{C}{\eps^2},\quad \|c_{0,\eps}\|_{H^2(\T^n)}\leq \frac{C}{\eps^6}\quad\text{ and }\quad \|c_{0,\eps}\|_{H^3(\T^n)}\leq\frac{C}{\eps^{16}}.
        \end{align*}
        Then there are constants $c,\eps_1>0$ such that if $\eta=\eta(\eps)\leq cR\eps^{16+M+\frac{1}{2}}$, $\eps\in(0,\eps_1]$, then the solution of the nonlocal Allen-Cahn equation \eqref{eq_ACnonloc_1}-\eqref{eq_ACnonloc_2} from Theorem \ref{th_existence} satisfies for all $\eps\in(0,\eps_1]$ and $T\in(0,T_0]$, where $\overline{c}_\eps:=c_\eps-c^A_\eps$, 
        \begin{align*}
        \sup_{t\in[0,T_0]}\|\overline{c}_\eps(t)\|_{L^2(\T^n)} +\|\nabla\overline{c}_\eps\|_{L^2(\T^n\times(0,T)\setminus\Gamma(\delta))} &\leq C \eps^{M+\frac{1}{2}},\\
        \|\nabla_\tau\overline{c}_\eps\|_{L^2(\T^n\times(0,T)\cap\Gamma(\delta))} + \eps \|\partial_n\overline{c}_\eps\|_{L^2(\T^n\times(0,T)\cap\Gamma(\delta))} &\leq C\eps^{M+\frac{1}{2}}.
        \end{align*}
        \item Let $M=2$. Then the same statement holds when time $T$ is small. 
    \end{enumerate}
\end{Theorem}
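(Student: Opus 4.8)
The plan is to run a Gronwall argument for the error $\overline c_\eps = c_\eps - c^A_\eps$, mimicking the structure of the local sharp interface proof (de Mottoni–Schatzman, Chen, and the refinement using the spectral estimate) but carrying along the extra perturbation produced by replacing $-\Delta$ with $\Lc_\eta$. Subtracting the equation satisfied by the approximate solution $c^A_\eps$ (which solves the local Allen-Cahn equation up to a remainder of order $\eps^{M+1/2}$ in the right norms, by Section~\ref{sec_local}) from \eqref{eq_ACnonloc_1}, one gets an equation of the form
\begin{align*}
  \partial_t \overline c_\eps + \Lc_\eta \overline c_\eps + \frac{1}{\eps^2}\big(f'(c_\eps) - f'(c^A_\eps)\big) = -\,\big(\Lc_\eta - (-\Delta)\big) c^A_\eps + r_\eps,
\end{align*}
where $r_\eps$ is the local consistency error. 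Testing with $\overline c_\eps$ and using $\langle \Lc_\eta \overline c_\eps, \overline c_\eps\rangle = 2\Ec_\eta(\overline c_\eps)\ge 0$, the delicate term $\tfrac1{\eps^2}\langle f'(c_\eps)-f'(c^A_\eps), \overline c_\eps\rangle$ is handled exactly as in the local case by the spectral estimate: one writes $f'(c_\eps)-f'(c^A_\eps) = f''(c^A_\eps)\overline c_\eps + O(\overline c_\eps^2)$, and the linearized part $\int |\nabla \overline c_\eps|^2 + \tfrac1{\eps^2} f''(c^A_\eps)\overline c_\eps^2$ is bounded below by $-C\|\overline c_\eps\|_{L^2}^2$ up to the good tangential/normal gradient contributions appearing in the statement. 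The nonlocal energy $2\Ec_\eta(\overline c_\eps)$ only helps (it is nonnegative), though one does need the nonlocal-to-local comparison again at this point to recover the $\int|\nabla\overline c_\eps|^2$ actually used by the spectral estimate: here one invokes Theorem~\ref{th_AbelsHurm} (the $L^2$-rate $\|\Lc_\eta\varphi + \Delta\varphi\|_{L^2}\lesssim \eta\,\|\varphi\|_{H^3}$, roughly) to replace $\Ec_\eta(\overline c_\eps)$ by $\tfrac12\|\nabla\overline c_\eps\|_{L^2}^2$ at the cost of $\eta$ times an $H^3$-norm.

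The genuinely new source term is $(\Lc_\eta - (-\Delta))c^A_\eps$. Since $c^A_\eps$ is smooth but its derivatives blow up polynomially in $\eps$, Theorem~\ref{th_AbelsHurm} gives $\|(\Lc_\eta+\Delta)c^A_\eps\|_{L^2}\le C\eta\|c^A_\eps\|_{H^3}\le C\eta\,\eps^{-N}$ for some fixed power $N$; pairing with $\overline c_\eps$ and using Young's inequality contributes $C\eta^2\eps^{-2N}$ to the Gronwall right-hand side, which is $\le C\eps^{2M+1}$ precisely under the smallness assumption $\eta\le cR\eps^{16+M+1/2}$ (the exponent $16$ being exactly what is needed to absorb the worst $\eps$-power from $\|c^A_\eps\|_{H^3}$-type quantities together with the $H^3$-bound on $c_\eps$ itself). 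The same mechanism closes the circular dependence: the error terms in the Gronwall estimate involve $\|c_\eps\|_{H^3}$ through Theorem~\ref{th_AbelsHurm}, which is why one must have a \emph{uniform} (in $\eps$, up to a fixed power) $H^3$-bound on the nonlocal solution.

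Supplying that uniform $H^3$-bound is the main obstacle, and it is exactly what the nonlocal Ehrling-type inequality (Theorem~\ref{th_nonloc_ehrling}) and the a priori estimates of Section~\ref{sec_nonlocAC_bound} are for. Concretely, after establishing the $H^1$, $H^2$, $H^3$ a priori bounds for $c_\eps$ with the claimed $\eps$-powers $\eps^{-2}, \eps^{-6}, \eps^{-16}$ (propagated from the initial-data assumptions by energy/parabolic estimates for the nonlocal equation, where the nonlocal Ehrling inequality lets one interpolate lower-order nonlocal energies against $\Ec_\eta$-type quantities uniformly in $\eta$), one feeds these into the Gronwall inequality
\begin{align*}
  \frac{d}{dt}\|\overline c_\eps(t)\|_{L^2}^2 + c_0\Big(\|\nabla_\tau \overline c_\eps\|_{L^2(\Gamma(\delta))}^2 + \eps^2\|\partial_n\overline c_\eps\|_{L^2(\Gamma(\delta))}^2 + \|\nabla\overline c_\eps\|_{L^2(\T^n\setminus\Gamma(\delta))}^2\Big) \le C\|\overline c_\eps\|_{L^2}^2 + C\eps^{2M+1},
\end{align*}
and integrate in time using the initial smallness $\|\overline c_\eps(0)\|_{L^2}\le R\eps^{M+1/2}$; Gronwall gives $\sup_t\|\overline c_\eps(t)\|_{L^2}^2 \le C\eps^{2M+1}$ on $[0,T_0]$ and, after re-integrating, the stated gradient bounds. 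A standard continuity/bootstrap argument is needed to make rigorous that the a priori $H^3$-bound persists on the whole interval (one runs the estimates on a maximal subinterval where $\|\overline c_\eps\|$ stays small and shows it cannot be proper). For part~(2), $M=2$, the $\eps$-power $\eps^{2M+1}=\eps^5$ is too weak to beat the growth of the lower-order norms over the full interval $[0,T_0]$, so one restricts to $T$ small enough that the constants in the Gronwall loop stay controlled; otherwise the argument is identical.
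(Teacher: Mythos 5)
Your overall architecture coincides with the paper's: subtract the local equation satisfied by $c^A_\eps$ (up to the remainder $r^A_\eps$), test with $\overline c_\eps$, use the local spectral estimate of Theorem \ref{th_local_spectral}, control the nonlocal--local discrepancy through Theorem \ref{th_AbelsHurm} combined with the uniform $H^3$-bound of Theorem \ref{th_H^3_estimate} (the exponent $16$ in $\eta\leq cR\eps^{16+M+\frac12}$ comes from $\|c_\eps\|_{H^3}\lesssim\eps^{-16}$; the $H^3$-norm of $c^A_\eps$ is far smaller), and close with a continuity/bootstrap argument. The only organizational difference is where you place the nonlocal-to-local error: you keep $\Lc_\eta$ acting on $\overline c_\eps$ and invoke Theorem \ref{th_AbelsHurm} twice (once on $c^A_\eps$, once to trade $\Ec_\eta(\overline c_\eps)$ for $\|\nabla\overline c_\eps\|_{L^2}^2$), whereas the paper keeps $-\Delta\overline c_\eps$ in the error equation and applies the theorem once to $\Lc_\eta c_\eps+\Delta c_\eps$; both variants need exactly the same $H^3$-information, so this is equivalent. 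Plain Gronwall in place of the paper's exponential weight $g_\beta=e^{-\beta t}$ is also acceptable, since the constants in the conclusion may depend on $T_0$.

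The genuine gap is the quadratic Taylor remainder of $f'$, i.e.\ $r_\eps(c_\eps,c^A_\eps)=\eps^{-2}\bigl[f'(c_\eps)-f'(c^A_\eps)-f''(c^A_\eps)\overline c_\eps\bigr]$. Pairing it with $\overline c_\eps$ produces a term of size $\eps^{-2}\int_{\T^n}|\overline c_\eps|^3\,dx$, which you never estimate and which cannot be absorbed into $C\|\overline c_\eps\|_{L^2}^2+C\eps^{2M+1}$ by $L^2$-smallness alone; as written, your displayed differential inequality is therefore unproved. In the paper this is the technically decisive step: one splits $\T^n$ into $\Gamma(\delta)$ and its complement and uses Gagliardo--Nirenberg-type estimates in tubular coordinates, compatible with the fact that inside $\Gamma(\delta)$ only $\nabla_\tau\overline c_\eps$ and $\eps\,\partial_n\overline c_\eps$ are controlled by the spectral term (cf.\ Moser \cite{MoserACvACND}, Lemma 6.6), together with the bootstrap assumption; the outcome is a contribution $CR^3\eps^{2M+1}\eps^{M-2}$ times a time factor. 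The factor $R\eps^{M-2}$ is precisely why the argument closes on all of $[0,T_0]$ for $M\geq 3$ but for $M=2$ only after taking $T$ (or $R$) small --- not, as you suggest, because $\eps^{2M+1}$ is ``too weak to beat the growth of the lower-order norms.'' Without this cubic estimate the Gronwall loop does not close, and the distinction between parts (1) and (2) of the theorem is left unexplained.
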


\begin{Remark}\upshape
    The $c^A_\eps$ in Theorem \ref{th_conv} is the approximate solution from the local case, cf.~Theorem \ref{th_local_approx} below, and $M$ corresponds to the number of terms in the expansion.
\end{Remark}

The structure of this paper is as follows. In Section \ref{sec_prelim}, we introduce some notation, assumptions and preliminary results. Moreover, we prove a nonlocal version of the Ehrling inequality, which we need to derive uniform estimates of the solutions. In Section \ref{sec_nonlocAC_existence}, we then show existence, uniqueness, higher regularity and uniform $H^3$-bounds of solutions for the nonlocal Allen-Cahn equation \eqref{eq_ACnonloc_1}-\eqref{eq_ACnonloc_2}. Section \ref{sec_conv} contains the proof of Theorem \ref{th_conv} (Convergence). Therein, we show stability estimates, which then imply the convergence.

\section{Preliminaries}\label{sec_prelim}
\subsection{Notation}\label{sec_notation}
Let $n\in\N$, $1\leq p\leq \infty$ and $k\in\N$. For any open set $\Omega\subseteq\R^n$ or $\Omega=\T^n$ and a Banach space $X$, we write $L^p(\Omega,X)$ and $W^{k,p}(\Omega,X)$ for the Lebesgue and Sobolev spaces of functions on $\Omega$ with values in $X$. The corresponding norms are denoted by $\|.\|_{L^p(\Omega,X)}$ and $\|.\|_{W^{k,p}(\Omega,X)}$, respectively. We also write $H^k(\Omega,X):=W^{k,2}(\Omega,X)$. If $X=\R$, we leave out the $X$ in the notation. Moreover, $H^{-k}(\T^n)$ are the dual spaces corresponding to $H^k(\T^n)$.

We recall that functions $f\in L^1(\T^n,\C)$ can be characterised by their Fourier expansion $f(x) = \frac{1}{(2\pi)^n}\sum_{k\in\Z^n}\hat{f}_ke^{ik\cdot x}$ , where the Fourier coefficients are given by 
\[
\hat{f}_k = \hat{f}(k):=\int_{\T^n} e^{-i k\cdot x} f(x)\,dx\quad\text{ for }k\in\Z^n.
\]
Moreover, we recall the Convolution Theorem, i.e.~for functions $f,g\in L^1(\T^n,\C)$, their convolution $f\ast g:=\int_{\T^n} f(.-y)g(y)\,dy$ satisfies the identity $\widehat{f\ast g}=\hat{f}\hat{g}$. Let us also mention Plancherel's Theorem, i.e.~the map $\widehat{.}: L^2(\T^n,\C) \rightarrow \ell^2(\Z^n,\C)$ is an isomorphism. More precisely, it holds $\frac{1}{(2\pi)^n}\|(\hat{f}(k))_{k\in\Z^n}\|_{\ell^2(\Z^n,\C)}^2 = \|f\|_{L^2(\T^n,\C)}^2$ for all $f\in L^2(\T^n,\C)$. Finally, we recall the theorem of Riemann-Lebesgue, which states $\lim\limits_{|k|\rightarrow\infty}|\hat{f}(k)| = 0$ for all $f\in L^1(\T^n,\C)$. 

Moreover, $H^s_2(\T^n)$ for $s\in\R$ are the usual Bessel-Potential spaces endowed with the norm $\|f\|_{H^s_2(\T^n)}^2 := \sum_{k\in\Z^n}(1+|k|^2)^s|\hat{f}(k)|^2$. 
It is well-known that $H^k_2(\T^n)$ is isomorphic to $H^k(\T^n)$ for all $k\in\Z$.


For $f\in L^1(\R^n,\C)$, we define the Fourier transform of $f$ by $\Fc[f](\xi):=\int_{\R^n} e^{-i \xi\cdot x} f(x)\,dx$ for $\xi\in\R^n$. 
Note that for clarity we use a different notation as for the torus. 
Note that if $\supp f\subseteq [-\pi,\pi]^n$ and we roughly identify $\T^n\cong[-\pi,\pi]^n$, then $\hat{f}(k)=\Fc[f](k)$ for all $k\in\Z^n$. We recall that the Convolution Theorem as well as the Riemann-Lebesgue Theorem also hold true for the Fourier transform on $\R^n$. In this setting, Plancherel's Theorem states that $\Fc : L^2(\R^n,\C) \rightarrow L^2(\R^n,\C)$ is an isomorphism. 

Finally, we use the convention that constants can change from line to line, but are independent from the free variables, unless noted otherwise.

\subsection{Assumptions}\label{sec_assumptions}
\begin{enumerate}[label=\textnormal{(A.\arabic*)}]
    \item \label{ass_1} Let $\T^n$, $n\in\{2,3\}$, be the $n$-dimensional torus. 
    \item \label{ass_2}  
    Let $\eta>0$ and $J_\eta: \R^n\rightarrow[0,\infty)$ be a non-negative function given by $J_\eta(x) = \frac{\rho_\eta(|x|)}{|x|^2}$ for all $x\in\T^n$ and $J_\eta\in W^{1,1}(\R^n)$, where $(\rho_\eta)_{\eta>0}$ is a family of mollifiers satisfying
	\begin{align*}
		&\rho_\eta: \R\rightarrow [0,\infty),\quad\rho_\eta\in L^1(\R),\quad\rho_\eta(r)=\rho_\eta(-r)\quad\text{for all }r\in\R, \,\eta>0, \\
        &\rho_\eta(r)=\eta^{-n} \rho_1\left(\frac{r}{\eta}\right)\quad\text{ for all }r\in\R,\\
		&\int_{0}^\infty\rho_\eta(r)\:r^{n-1}\,dr = \frac{2}{C_n}\quad\text{for all }\eta>0, \\
		&\lim\limits_{\eta\searrow 0}\int_{\delta}^\infty\rho_\eta(r)\:r^{n-1}\,dr = 0\quad\text{for all }\delta>0,
	\end{align*}
	where $C_n := \int_{\mathbb{S}^{n-1}}|e_1\cdot\sigma|^2\,d\mathcal{H}^{n-1}(\sigma)$. Moreover, let $\rho_1$ be compactly supported in $(-\pi,\pi)$. Finally, let $\tilde{J}_\eta: \T^n\rightarrow[0,\infty)$ be the $2\pi$-periodic extension of $J_\eta$.
    \item \label{ass_3} $f:\R\rightarrow\R$ is a smooth double-well potential with wells of equal depth, more precisely,
    \[
    f\in C^\infty(\R),\quad f(\pm 1)=f'(\pm 1)=0, \quad f''(\pm 1)>0,\quad f>0\text{ in }(-1,1)
    \]
    and $f'<0$ in $(-\infty,-R_0)$ as well as $f'>0$ in $(R_0,\infty)$ for some $R_0\geq1$. Finally, we assume
    \[
    |f'(r)|\leq C(1+|r|^3)\quad\text{ for all }r\in\R
    \]
    and $f''\geq -\alpha$ for some $\alpha\geq 0$.
 \end{enumerate}

 \begin{Remark}\upshape
     Note that a typical example for $J_\eta$ is given through the choice 
     \[
     \rho_1(r):=C_{\beta,n,\tilde{\rho}}\,|r|^{\beta} \tilde{\rho}(r)\quad\text{ for all }r\in\R,
     \]
     where $\beta\in(3-n,2)$ to ensure the integrability conditions and $\tilde{\rho}\in C_0^\infty((-\pi,\pi))$, $\tilde{\rho}\not\equiv 0$ is symmetric, non-negative and $C_{\beta,n,\tilde{\rho}}>0$ is a normalization constant. Note that this choice corresponds to the $W^{1,1}$-kernel setting in Davoli, Scarpa, Trussardi \cite{DavoliScarpaTrussardi}.
 \end{Remark}

 \begin{Remark}\label{th_opt_profile}\upshape
     Note that the assumptions on the double-well potential $f$ in \ref{ass_3} are standard. Under these assumptions, we then define the function $\theta_0\in C^2(\R)$ to be the unique solution to the ordinary differential equation,
    \begin{align}\label{eq_opt_profile}
    -\theta_0''+ f'(\theta_0) = 0 \quad\text{ in }\R, \quad \lim_{\rho\rightarrow\pm\infty} \theta_0(\rho) = \pm 1, \quad \theta_0(0)=0.
    \end{align}
    The function $\theta_0$ is also referred to as the \textit{optimal profile}. 
 \end{Remark}
 
\subsection{Inequalities}\label{sec_inequalities}

\begin{Theorem}\label{th_AbelsHurm}
		Let $n\in\{2,3\}$. Moreover, let $\Lc_\eta$ be defined as in \eqref{eq_nonlocal_operator} and $J_\eta$ satisfy \ref{ass_2} from Section \ref{sec_assumptions}. Then for all $c\in H^3(\T^n)$ it holds for a constant $K>0$ independent of $\eta$
		\begin{align}\label{eq_AbelsHurm}
			\Big\|\Lc_\eta c + \Delta c\Big\|_{L^2(\T^n)}\leq K\eta\|c\|_{H^3(\T^n)}. 
		\end{align}
	\end{Theorem}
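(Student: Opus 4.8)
The plan is to carry out the estimate on the Fourier side. Since $\rho_1$ has compact support in $(-\pi,\pi)$, for $\eta$ small the kernel $J_\eta$ is supported in $[-\pi,\pi]^n$, so (as noted in Section~\ref{sec_notation}) $\widehat{\tilde J_\eta}(k)=\Fc[J_\eta](k)$ for $k\in\Z^n$, which by evenness of $J_\eta$ equals $\int_{\R^n}\cos(k\cdot x)J_\eta(x)\,dx$. Writing $\Lc_\eta c=\widehat{\tilde J_\eta}(0)\,c-\tilde J_\eta\ast c$ (with $\Lc_\eta c\in L^2(\T^n)$ by Young's inequality) and taking Fourier coefficients, the $k$-th coefficient of $\Lc_\eta c+\Delta c$ equals $m_\eta(k)\hat c(k)$ with $m_\eta(k):=\int_{\R^n}(1-\cos(k\cdot x))J_\eta(x)\,dx-|k|^2$. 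By Plancherel's theorem and the isomorphism $H^3_2(\T^n)\cong H^3(\T^n)$, it then suffices to prove the pointwise symbol bound $|m_\eta(k)|\le C\eta(1+|k|^2)^{3/2}$ for all $k\in\Z^n$, with $C$ independent of $\eta,k$; summing $|m_\eta(k)|^2|\hat c(k)|^2$ against $(1+|k|^2)^3$ then yields \eqref{eq_AbelsHurm}.

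The next step is to compute two moments of $J_\eta$. Passing to polar coordinates, using $J_\eta(x)=\rho_\eta(|x|)/|x|^2$, the normalization $\int_0^\infty\rho_\eta(r)r^{n-1}\,dr=2/C_n$, rotational invariance of $\mathcal H^{n-1}$ on $\mathbb S^{n-1}$ and the definition of $C_n$, the second moment comes out exactly right:
\[
\int_{\R^n}\tfrac12(k\cdot x)^2J_\eta(x)\,dx=\tfrac12\Bigl(\int_0^\infty\rho_\eta(r)r^{n-1}\,dr\Bigr)\int_{\mathbb S^{n-1}}(k\cdot\sigma)^2\,d\mathcal H^{n-1}(\sigma)=|k|^2,
\]
so $m_\eta(k)=\int_{\R^n}\bigl(1-\cos(k\cdot x)-\tfrac12(k\cdot x)^2\bigr)J_\eta(x)\,dx$. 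The same change of variables together with the scaling $\rho_\eta(r)=\eta^{-n}\rho_1(r/\eta)$ gives $\int_{\R^n}|x|^3J_\eta(x)\,dx=\bigl|\mathbb S^{n-1}\bigr|\int_0^\infty r^{\,n}\rho_\eta(r)\,dr=\eta\,\bigl|\mathbb S^{n-1}\bigr|\int_0^\infty s^{\,n}\rho_1(s)\,ds=:C_1\eta$, which is finite: $\rho_1$ has compact support and $J_\eta\in W^{1,1}(\R^n)\subset L^1(\R^n)$ forces $\int_0^\infty\rho_1(s)s^{n-3}\,ds<\infty$, hence $\int_0^\infty s^{\,n}\rho_1(s)\,ds=\int_0^\infty s^3\,(s^{n-3}\rho_1(s))\,ds<\infty$.

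Finally I would invoke the elementary inequality $|1-\cos t-\tfrac12 t^2|\le\min\{t^2,t^4\}\le|t|^3$ for all $t\in\R$, a consequence of $1-\cos t\le\tfrac12 t^2$ and the Taylor estimate $|1-\cos t-\tfrac12 t^2|\le t^4/24$. With $t=k\cdot x$ and $|k\cdot x|\le|k|\,|x|$ this gives $|m_\eta(k)|\le|k|^3\int_{\R^n}|x|^3J_\eta(x)\,dx=C_1\eta|k|^3\le C_1\eta(1+|k|^2)^{3/2}$, the required symbol bound, and Plancherel concludes.

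The main obstacle is the need to combine estimates on two scales: the second-order Taylor expansion of the symbol alone only yields $|m_\eta(k)|\lesssim\eta^2|k|^4$ (since $|x|\lesssim\eta$ on $\supp J_\eta$), which is useless once $|k|\gg 1/\eta$, and it must be balanced against the crude uniform bound $|m_\eta(k)|\le 2|k|^2$ coming from $1-\cos t\le\tfrac12 t^2$; the inequality $|1-\cos t-\tfrac12 t^2|\le|t|^3$ packages exactly this trade-off (equivalently, one splits into $|k|\le 1/\eta$, where $\eta^2|k|^4\le\eta|k|^3$, and $|k|>1/\eta$, where $2|k|^2\le 2\eta|k|^3$). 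A secondary technical point is checking that the weighted moments of $\rho_\eta$ converge, which is where the compact support of $\rho_1$ and the $W^{1,1}$-hypothesis on $J_\eta$ enter.
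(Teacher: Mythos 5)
Your argument is correct, and it is precisely the Fourier-series adaptation of \cite[Lemma 3.1]{AbelsHurm} that the paper's one-line proof alludes to: the Fourier coefficient of $\Lc_\eta c+\Delta c$ is $m_\eta(k)\hat c(k)$, the second moment of $J_\eta$ cancels $|k|^2$ exactly by the normalization of $\rho_\eta$, and the third-order Taylor remainder $|1-\cos t-\tfrac12 t^2|\le |t|^3$ together with $\int_{\R^n}|x|^3J_\eta(x)\,dx=C_1\eta$ gives the symbol bound $|m_\eta(k)|\le C\eta(1+|k|^2)^{3/2}$, after which Plancherel concludes. All the moment computations and the finiteness checks are sound, so this is essentially the same approach as the paper's (cited) proof.
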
 
 \begin{proof}
     The proof of this theorem can be done analogously as in \cite[Lemma 3.1]{AbelsHurm} by using Fourier series instead of Fourier transformation.
 \end{proof}
\begin{Theorem}[\textbf{Nonlocal Ehrling Inequality}]\label{th_nonloc_ehrling}
    Let $n\in\N$, $R\geq 1$, $0<\eta\leq\frac{1}{R}$ and $\Ec_\eta$ be defined as in \eqref{eq_E_eta} with $J_\eta$ satisfying \ref{ass_2} from Section \ref{sec_assumptions}. Then for all $u\in L^2(\T^n)$ it holds with $C>0$ independent of $R,\eta,u$,
    \[
    \|u\|_{L^2(\T^n)}^2 \leq \frac{C}{R^2} \Ec_\eta(u) + C R^2 \|u\|_{H^{-1}(\T^n)}^2. 
    \]
\end{Theorem}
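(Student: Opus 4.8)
The plan is to work on the Fourier side, where $\Ec_\eta$, the $L^2$-norm and the $H^{-1}$-norm all diagonalise. Writing $u(x)=\frac{1}{(2\pi)^n}\sum_{k\in\Z^n}\hat u_k e^{ik\cdot x}$, a standard computation using the substitution $z=x-y$ and Plancherel gives
\[
\Ec_\eta(u)=\frac{1}{(2\pi)^n}\sum_{k\in\Z^n}m_\eta(k)\,|\hat u_k|^2,\qquad m_\eta(k):=\frac{1}{2}\int_{\R^n}J_\eta(z)\bigl(1-\cos(k\cdot z)\bigr)\,dz,
\]
the symbol of the nonlocal operator $\Lc_\eta$. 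Since $\|u\|_{L^2}^2=\frac{1}{(2\pi)^n}\sum_k|\hat u_k|^2$ and $\|u\|_{H^{-1}}^2\asymp\frac{1}{(2\pi)^n}\sum_k(1+|k|^2)^{-1}|\hat u_k|^2$, the claimed inequality reduces to the pointwise multiplier estimate
\[
1\le \frac{C}{R^2}\,m_\eta(k)+C R^2\,(1+|k|^2)^{-1}\qquad\text{for all }k\in\Z^n,\ 0<\eta\le\tfrac1R,
\]
with $C$ independent of $R,\eta,k$. (Up to adjusting constants I may freely replace $(1+|k|^2)^{-1}$ by $(1+|k|)^{-2}$.)

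To prove this multiplier bound I would split into two regimes according to the size of $|k|$ relative to $R$. \emph{Case $|k|\le 2R$:} then $(1+|k|^2)^{-1}\ge (1+4R^2)^{-1}\ge c/R^2$ (using $R\ge1$), so the second term alone is $\ge c'$, and choosing $C$ large enough makes the right-hand side $\ge1$. \emph{Case $|k|>2R$:} here I must extract a lower bound $m_\eta(k)\gtrsim R^2$ from the first term, i.e. $\tfrac{C}{R^2}m_\eta(k)\ge1$. This is the heart of the argument: I need $m_\eta(k)\ge cR^2$ whenever $|k|\ge 2R$ and $\eta\le 1/R$. The point is that $\Ec_\eta$ converges to the Dirichlet energy (symbol $|k|^2$) as $\eta\to0$, so for $\eta$ small relative to $|k|^{-1}$ one expects $m_\eta(k)$ comparable to $|k|^2\ge 4R^2$; the constraint $\eta\le 1/R\le 2/|k|$ in this regime is exactly what guarantees we are in the "small $\eta$" region for this particular frequency.

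\textbf{The main obstacle} is the quantitative lower bound $m_\eta(k)\ge c|k|^2$ valid uniformly for all $k$ with $\eta|k|\lesssim1$. I would obtain it directly from the structure of $J_\eta$ in \ref{ass_2}: using $J_\eta(z)=\rho_\eta(|z|)/|z|^2$, the scaling $\rho_\eta(r)=\eta^{-n}\rho_1(r/\eta)$, the normalisation $\int_0^\infty\rho_\eta(r)r^{n-1}\,dr=2/C_n$, and the substitution $z=\eta w$, one computes
\[
m_\eta(k)=\frac{1}{2}\int_{\R^n}\frac{\rho_1(|w|)}{|w|^2}\bigl(1-\cos(\eta k\cdot w)\bigr)\,dw .
\]
Now fix a direction and restrict the $w$-integral to a fixed cone/annulus where $\rho_1>0$ (possible since $\rho_1\not\equiv0$ is continuous and symmetric); for $|w|$ in that fixed range and $|\eta k|$ in the regime under consideration one uses the elementary bound $1-\cos t\ge c\,t^2$ for $|t|\le t_0$ together with (if $|\eta k|$ is not small, which can only happen for a bounded range since $\eta|k|\le 2$) the cruder bound $1-\cos t\ge 0$ plus an averaging-over-directions argument invoking the definition of $C_n=\int_{\mathbb S^{n-1}}|e_1\cdot\sigma|^2\,d\Hc^{n-1}$. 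Averaging $1-\cos(\eta k\cdot w)$ over $w$ in a full annulus and keeping the quadratic part produces exactly a constant times $|\eta k|^2\cdot\eta^{-2}=|k|^2$ up to the normalisation constant, after the genuinely small-argument contributions are controlled. Carrying the case $\eta|k|\gtrsim1$ requires only that $m_\eta(k)$ stays bounded below by a positive constant there, which follows because $1-\cos(\eta k\cdot w)$ does not vanish identically on the support of $\rho_1$; combined with $|k|^2\le 4/\eta^2\cdot(\eta|k|)^2\le$ const, this still yields $m_\eta(k)\gtrsim|k|^2$. Once the multiplier inequality is in hand, summing against $|\hat u_k|^2$ and using Plancherel and the $H^{-1}$-norm equivalence finishes the proof; the density of trigonometric polynomials in $L^2(\T^n)$ handles the case of general $u\in L^2(\T^n)$.
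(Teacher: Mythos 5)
Your overall strategy is the same as the paper's: pass to the Fourier side, identify the symbol of $\Ec_\eta$, rescale to a fixed profile $\Psi_0(\xi):=\tfrac12\int_{\R^n}\rho_1(|w|)|w|^{-2}(1-\cos(\xi\cdot w))\,dw$, prove a quadratic lower bound near $\xi=0$ and uniform positivity away from $0$, and split frequencies at $|k|\sim R$. (Minor slip: your displayed rescaled formula for $m_\eta(k)$ drops the factor $\eta^{-2}$; the correct identity is $m_\eta(k)=\eta^{-2}\Psi_0(\eta k)$, which your later text implicitly uses.)

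There is, however, a genuine error in your treatment of the regime $|k|>2R$. You assert $\eta\le 1/R\le 2/|k|$, but in this regime $|k|>2R$ gives $1/R>2/|k|$, so the inequality runs the wrong way: the hypothesis $\eta\le 1/R$ does \emph{not} bound $\eta|k|$, which can be arbitrarily large (e.g.\ $R=1$, $\eta=1$, $|k|\to\infty$). Consequently your claim that the case ``$|\eta k|$ not small'' occurs only for a bounded range is false, and the bound you aim for there, $m_\eta(k)\gtrsim|k|^2$, is actually \emph{false} when $\eta|k|\gg1$: one has $m_\eta(k)=\eta^{-2}\Psi_0(\eta k)\approx c\,\eta^{-2}\ll|k|^2$. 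The fix is available within your own framework and is what the paper does: for $\eta|k|\ge1$ you only need $m_\eta(k)\ge cR^2$, and this follows from $m_\eta(k)=\eta^{-2}\Psi_0(\eta k)\ge c_1\eta^{-2}\ge c_1R^2$, where the uniform lower bound $\Psi_0(\xi)\ge c_1$ for $|\xi|\ge1$ is obtained from strict positivity of $\Psi_0$ off the origin, continuity, and the Riemann--Lebesgue lemma (which gives $\Psi_0(\xi)\to\tfrac12\int\rho_1(|w|)|w|^{-2}\,dw>0$ as $|\xi|\to\infty$). With that correction, and the quadratic bound $\Psi_0(\xi)\ge c_0|\xi|^2$ for $|\xi|\le1$ (which you obtain from $1-\cos t\ge ct^2$ and the normalisation of $\rho_1$ via $C_n$, exactly as the paper obtains $D^2\Psi(0)=\textup{Id}$ by Taylor expansion), your multiplier argument closes and coincides with the paper's proof.
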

\begin{proof}
    Let $R\geq 1$ and $0<\eta\leq\frac{1}{R}$ be arbitrary. Using Plancherel's theorem, the convolution theorem and the definition of $J_\eta$, we obtain
    \begin{align*}
        \Ec_\eta(u) = \int_{\T^n}\Lc_\eta u(x)u(x)\,\text{d}x 
        =\frac{1}{(2\pi)^n}\sum_{k\in\Z^n}\widehat{\Lc_\eta u}(k)\overline{\widehat{u}(k)} 
        =\frac{1}{(2\pi)^n}\sum_{k\in\Z^n}\left(\widehat{J_\eta}(0) - \widehat{J_\eta}(k)\right)|\hat{u}(k)|^2
    \end{align*}
    for all $u\in L^2(\T^n)$. 
    Observe that the interaction kernel $J_\eta$ is compactly supported in $(-\pi,\pi)^n$.
    Therefore,
    \begin{align}\label{eq_Fourier}
        \widehat{J_\eta}(k) = \int_{\T^n}\tilde{J}_\eta(z)e^{-ik\cdot z}\,\text{d}z = \int_{\R^n}J_\eta(z)e^{-ik\cdot z}\,\text{d}z = \Fc[J_\eta](k)
    \end{align}
    for all $k\in\Z^n$.
    For the following, we define the auxiliary function 
    \begin{align*}
    \Psi:\R^n\rightarrow\C:\xi\mapsto  \frac{1}{(2\pi)^n}\left(\widehat{J_1}(0)-\Fc[J_1](\xi)\right).
    \end{align*}
    Due to \eqref{eq_Fourier} and since $\Fc[J_1]$ is a function on $\R^n$, it follows by standard Fourier analysis that $\Psi\in C^\infty(\R^n)$ and $\Psi$ is $\R$-valued because $J_1$ is even.
    Moreover, a scaling argument yields
    \begin{align*}
    \frac{1}{(2\pi)^n}\left(\widehat{J_\eta}(0) - \widehat{J_\eta}(k)\right)= \frac{1}{\eta^2} \Psi(k\eta)\quad\text{ for all }k\in\Z^n,\;0 < \eta \leq \frac{1}{R}.
    \end{align*}
    We need to derive more properties of $\Psi$. It holds $\Psi\geq 0$ because of
    \begin{align*}
        \big|\Fc[J_1](\xi)\big| \leq \int_{\R^n}J(x)\,\text{d}x = \widehat{J_1}(0)\quad\text{ for all }\xi\in\R^n.
    \end{align*}
    Moreover, a contradiction argument shows $\Psi(\xi)>0$ for all $\xi\in\R^n\setminus\{0\}$.
    
    For the gradient we obtain
    \begin{align*}
        \nabla\Psi(\xi) = -i\int_{\R^n}J_1(x)e^{-ix\cdot\xi}x\,\text{d}x.
    \end{align*}
    In particular, this implies $\nabla\Psi(0)=0$ since the interaction kernel $J_1$ is even. Moreover, we have
    \begin{align*}
        D^2\Psi(\xi)_{k,j} = \int_{\R^n}J_1(x)e^{-ix\cdot\xi}x_kx_j\,\text{d}x,
    \end{align*}
    and therefore $D^2\Psi(0) = \textup{Id}$ due to the properties of $J_1$. Hence, a Taylor expansion around $0$ shows that $\Psi$ has quadratic growth in a small neighbourhood around $0$. In particular, a compactness argument yields
    \begin{align*}
    \Psi(\xi)\geq c_0|\xi|^2\quad\text{ for all }|\xi|\leq 1,
    \end{align*}
    where $c_0>0$ is a constant independent of $R,\eta,u$.
    Finally, because of the Riemann-Lebesgue Lemma we have $|\widehat{J_1}(\xi)| \rightarrow 0$ for $|\xi|\rightarrow\infty$ which implies that $\Psi(\xi)\rightarrow \frac{1}{(2\pi)^n}\widehat{J_1}(0)$ for $|\xi|\rightarrow\infty$. In particular, by continuity it holds
    \begin{align*}
        \Psi(\xi) \geq c_1\quad\text{ for all }|\xi|\geq 1,
    \end{align*}
    where $c_1>0$ is a constant independent of $R,\eta,u$. 
    
    Finally, we can prove the main assertion. Due to the arguments in the steps before, we have 
    \begin{align*}
        \Ec_\eta(u) &= \sum_{k\in\Z^n}\frac{\Psi(k\eta)}{\eta^2}|\hat{u}(k)|^2 \\
        &\geq \sum_{k\in\Z^n,|k|\leq R}c_0|k|^2|\hat{u}(k)|^2 
        +\sum_{k\in\Z^n,|k|\geq \frac{1}{\eta}}\frac{c_1}{\eta^2}|\hat{u}(k)|^2 
        +\sum_{k\in\Z^n,\frac{1}{\eta}> |k| > R}c_0|k|^2|\hat{u}(k)|^2 \\
        &\geq \sum_{k\in\Z^n,|k|\leq R}c_0|k|^2|\hat{u}(k)|^2 
        +c\sum_{k\in\Z^n,|k|> R}R^2|\hat{u}(k)|^2 \\
        &\geq c\sum_{k\in\Z^n,|k|> R}R^2|\hat{u}(k)|^2=c\sum_{k\in\Z^n}R^2|\hat{u}(k)|^2 
        - c\sum_{k\in\Z^n,|k|\leq R}R^2|\hat{u}(k)|^2,
    \end{align*}
    where $c:=\min\{c_0,c_1\}>0$. Noting that $R^2 \leq \frac{2R^4}{1+|k|^2}$ for all $k\in\Z^n$ with $|k|\leq R$, we obtain
    \begin{align*}
        \Ec_\eta(u) &\geq c\sum_{k\in\Z^n}R^2|\hat{u}(k)|^2 -c\sum_{k\in\Z^n}\frac{2R^4}{1+|k|^2}|\hat{u}(k)|^2 \\
        &= (2\pi)^n cR^2\|u\|_{L^2(\T^n)}^2 - 2cR^4\|u\|_{H^{-1}_2(\T^n)}^2,
    \end{align*}
    and thus the assertion follows.
\end{proof}

\subsection{Results for the Local Allen-Cahn Equation}\label{sec_local}
First, we fix some notation concerning the evolving hypersurface. 

Let $T_0>0$ and $(\Gamma_t)_{t\in[0,T_0]}$ with $\Gamma_t\subset(-\pi,\pi)^n$ for all $t\in[0,T_0]$ be a smoothly evolving compact hypersurface in $\T^n$. Moreover, let $\vec{n}_{\Gamma_t}=\vec{n}(.,t)$ be the exterior normal of $\Gamma_t$ for all $t\in[0,T_0]$ and $\Gamma:=\bigcup_{t\in[0,T_0]}\Gamma_t\times\{t\}$. Here $\Gamma_t$ separates $\T^n$ into two disjoint connected domains $\Omega^\pm_t$ for all $t\in[0,T_0]$, where $\vec{n}(.,t)$ points into $\Omega^\pm_t$, and we set $\Omega^\pm:=\bigcup_{t\in[0,T_0]}\Omega^\pm_t\times\{t\}$. We consider a reference hypersurface $\Sigma$ and a smooth parametrization $X_0:\Sigma\times[0,T_0]\rightarrow\T^n$ such that $\overline{X}_0:=(X_0,\textup{pr}_t):\Sigma\times[0,T_0]\rightarrow\Gamma$ is a smooth diffeomorphism. Then we denote by
\[
X:[-\delta,\delta]\times\Sigma\times[0,T_0]\rightarrow\T^n: (r,s,t)\mapsto X_0(s,t) + r \vec{n}(s,t)
\]
for $\delta>0$ small the standard tubular neighbourhood coordinate system. 

It is well known that, for $\delta_0>0$ small and all $\delta\in(0,\delta_0]$, $\overline{X}:=(X,\textup{pr}_t)$ is a smooth diffeomorphism onto the neighbourhood $\overline{\Gamma(\delta)}$ of $\Gamma$, where $\Gamma(\delta):=\bigcup_{t\in[0,T_0]}\Gamma_t(\delta)\times\{t\}$ and $\Gamma_t(\delta):=\{x\in\T^n:\textup{dist}(x,\Gamma_t)<\delta\}$ is the tubular neighbourhood of $\Gamma_t$ for $t\in[0,T_0]$. 

Finally, we set $(r,s,\textup{pr}_t):=\overline{X}^{-1}:\overline{\Gamma(\delta)}\rightarrow[-\delta,\delta]\times\Sigma\times[0,T_0]$ and define the tangential and normal derivative as
\[
\nabla_\tau\psi := (D_xs)^\top \left[\nabla_\Sigma(\psi\circ\overline{X})\circ\overline{X}^{-1}\right]\quad\text{ and }\quad \partial_n\psi:=\partial_r(\psi\circ\overline{X})\circ\overline{X}^{-1}
\]
for sufficiently smooth $\psi:\Gamma(\delta)\rightarrow\R$, $\delta\in(0,\delta_0]$. Note that $|\nabla\psi|^2=|\nabla_\tau\psi|^2+|\partial_n\psi|^2$.

In the following theorem, we recall a standard result from asymptotic expansions for the local Allen-Cahn equation \eqref{eq_ACloc_1}.
\begin{Theorem}[\textbf{Approximate Solution for Local Allen-Cahn Equation}]\label{th_local_approx}
    Let the notation be as above, $\Gamma$ be evolving according to mean curvature flow and let $M\in\N\cup\{0\}$.
    
    There are smooth $h_j:\Sigma\times[0,T_0]\rightarrow\R$, 
    $u_{j+1}:\R\times\Sigma\times[0,T_0]\rightarrow\R$ for $j=1,...,M$ with
    \[
    \partial_\rho^j\partial_s^k\partial_t^l u_{j+1}(\rho,s,t)= \Oc(e^{-\beta|\rho|}) \quad\text{ uniformly in }(\rho,s,t),
    \]
    such that the following holds: defining for $\eps>0$, with the optimal profile $\theta_0$ as in Remark \ref{th_opt_profile},
    \begin{align*}
    \rho_\eps(x,t)&:=\frac{r(x,t)-\eps h_\eps(s(x,t),t)}{\eps},\quad h_\eps:=\sum_{j=1}^M \eps^{j-1}h_j,\\
    u^I_\eps(x,t) &:=\theta_0(\rho_\eps(x,t))+ \sum_{k=2}^{M+1}\eps^j u_{j+1}(\rho_\eps(x,t),s(x,t),t)\quad\text{ in }\Gamma(\delta)
    \end{align*}
    as well as with a cutoff $\eta:\R\rightarrow[0,1]$ smooth with $\eta(r)=1$ for 
    $|r|\leq 1$, $\eta(r)=0$ for $|r|\geq 2$,
    \begin{align*}
        u^A_\eps := \begin{cases}
            \eta(\frac{r}{\delta_0}) u^I_\eps + (1-\eta(\frac{r}{\delta_0})) \textup{sign}(r)\quad &\text{ in }\Gamma(\delta_0),\\
            \pm 1 \quad &\text{ in }\T^n\times(0,T_0)\setminus\Gamma(\delta_0),
        \end{cases}
    \end{align*}
    then $u^A_\eps$ is smooth, uniformly bounded for small $\eps$ and for $r^A_\eps:=\partial_tu^A_\eps-\Delta u^A_\eps +\frac{1}{\eps^2} f'(u^A_\eps)$, the remainder of $u^A_\eps$ in the local Allen-Cahn equation \eqref{eq_ACloc_1}, it holds
    \begin{alignat*}{2}
    \left|r^A_\eps\right|&\leq C(\eps^M e^{-c|\rho_\eps|} + \eps^{M+1})&\quad&\text{ in }\Gamma(\delta_0),\\
    \quad r^A_\eps&=0&\quad&\text{ in }\T^n\times(0,T_0)\setminus\Gamma(\delta_0),
    \end{alignat*}
    where $c,C>0$ are some constants and $\eps>0$ is small.
\end{Theorem}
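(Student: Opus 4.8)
The plan is to carry out the classical matched asymptotic expansion for the local Allen--Cahn equation in the tubular neighbourhood $\Gamma(\delta_0)$ and to glue the resulting inner expansion to the bulk values $\pm1$ by means of the cut-off $\eta(\cdot/\delta_0)$; the outer expansion is trivial because $f'(\pm1)=0$, which is exactly why $u^A_\eps\equiv\pm1$ off $\Gamma(\delta_0)$. First I would rewrite $\partial_t$ and $\Delta$ in the coordinates $(r,s,t)=(r(x,t),s(x,t),\textup{pr}_t)$: one has $\Delta=\partial_r^2+\bar H(r,s,t)\,\partial_r+\textup{(tangential second-order operator)}$ with $\bar H(0,\cdot,\cdot)=-H_{\Gamma_t}$ the mean curvature, and $\partial_t|_x=\partial_t+(\partial_t r)\,\partial_r+(\partial_t s)\cdot\nabla_s$ with smooth coefficients and $\partial_t r|_{r=0}=-V_{\Gamma_t}$. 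Taylor expanding these coefficients in $r$ about $r=0$ and substituting $r=\eps(\rho+h_\eps(s,t))$, i.e.\ passing to the stretched variable $\rho=\rho_\eps$, turns $\partial_t-\Delta+\eps^{-2}f'(\cdot)$, applied to a function of $(\rho,s,t)$, into a formal power series in $\eps$ whose coefficients are differential operators in $(\rho,s,t)$; here one uses that $\nabla r$ is normal and the correction $h_\eps$ tangential, so that $|\nabla\rho_\eps|^2=\eps^{-2}+|\nabla_\tau h_\eps|^2$ with no $\eps^{-1}$ term.

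Next I would insert the ansatz $u^I_\eps=\theta_0(\rho)+\sum_{j\ge1}\eps^j u_{j+1}(\rho,s,t)$ (truncated at the order specified in the statement) and collect powers of $\eps$. At order $\eps^{-2}$ one obtains the profile equation $-\theta_0''+f'(\theta_0)=0$, solved by the optimal profile from Remark \ref{th_opt_profile}. At order $\eps^{-1}$ one obtains $\Lc u_2=(H_{\Gamma_t}-V_{\Gamma_t})\,\theta_0'$ with $\Lc:=-\partial_\rho^2+f''(\theta_0)$; the right-hand side vanishes by the mean curvature flow equation, and since $\Lc\theta_0'=0$ one may normalise $u_2\equiv0$. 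At order $\eps^{k}$ for $k\ge0$ one obtains $\Lc u_{k+3}=\mathcal{S}_k$, where $\mathcal{S}_k$ depends explicitly on $\theta_0,u_2,\dots,u_{k+2}$, on $h_1,\dots,h_{k+1}$ and their $\rho,s,t$-derivatives, with smooth $\eps$-independent coefficients coming from the geometry of $\Gamma$. Since $\theta_0'>0$ is the ground state of $\Lc$, the operator $\Lc$ is self-adjoint and nonnegative on $L^2(\R)$ with $\ker\Lc=\R\theta_0'$ and a spectral gap above $0$, so $\Lc u=g$ is solvable precisely when $\int_\R g\,\theta_0'\,d\rho=0$, with a unique solution subject to $\int_\R u\,\theta_0'\,d\rho=0$. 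Working out $\int_\R\mathcal{S}_k\,\theta_0'\,d\rho=0$ shows that, as an equation for $h_{k+1}$, this is the linearised mean curvature flow equation $\partial_t h_{k+1}-\Delta_{\Gamma_t}h_{k+1}-|\textup{II}_{\Gamma_t}|^2 h_{k+1}=g_{k+1}$ on $\Gamma$, with right-hand side determined by the previous steps; since $\Gamma$ is smooth on the fixed interval $[0,T_0]$, linear parabolic theory yields a unique smooth $h_{k+1}$ for any smooth initial datum (e.g.\ $h_{k+1}|_{t=0}=0$), after which $u_{k+3}:=\Lc^{-1}\mathcal{S}_k$ is determined. Iterating $k=0,1,\dots$ up to the order at which the first uncancelled contribution sits at $\eps^M$ produces all the functions appearing in the ansatz.

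The exponential decay follows by induction on $j$: $\theta_0\mp1$ and all its derivatives decay like $e^{-\beta|\rho|}$ for any $\beta<\min(\sqrt{f''(1)},\sqrt{f''(-1)})$, by linearising the profile ODE at $\rho=\pm\infty$; if the claim holds for $u_2,\dots,u_j$, then $\mathcal{S}_{j-2}$ inherits it (being a sum of products of such functions with smooth $(s,t)$-coefficients), and since $f''(\theta_0(\rho))\to f''(\pm1)>0$ exponentially, a variation-of-constants and bootstrap argument for $\Lc u_{j+1}=\mathcal{S}_{j-2}$ gives $u_{j+1}=\Oc(e^{-\beta|\rho|})$; $s$- and $t$-derivatives are handled by differentiating the hierarchy ($\Lc$ being $(s,t)$-independent) and using $\langle\partial_s\mathcal{S}_k,\theta_0'\rangle=\langle\partial_t\mathcal{S}_k,\theta_0'\rangle=0$, obtained by differentiating the solvability identities. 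Consequently $u^I_\eps$ is smooth on $\Gamma(\delta_0)$, and by the decay $u^I_\eps-\textup{sign}(r)$ together with all its derivatives is $\Oc(e^{-c\delta_0/\eps})$ on $\{|r|\ge\delta_0\}$, whence $u^A_\eps$ is smooth on $\T^n\times[0,T_0]$, uniformly bounded for small $\eps$, and $\to\pm1$ locally uniformly on $\Omega^\pm$. For the remainder: $r^A_\eps\equiv0$ on $\T^n\times(0,T_0)\setminus\Gamma(\delta_0)$ since there $u^A_\eps\equiv\pm1$ and $f'(\pm1)=0$; on $\{\eta(r/\delta_0)=1\}$ we have $u^A_\eps=u^I_\eps$ and all contributions of order $\eps^{-2},\dots,\eps^{M-1}$ cancel by construction, so $r^A_\eps=\eps^M R_M(\rho_\eps,s,t)+\Oc(\eps^{M+1})$ with $|R_M|\le Ce^{-c|\rho_\eps|}$ by the decay estimates; and on the transition layer $\delta_0\le|r|\le2\delta_0$ every term of $r^A_\eps$ carries a factor $u^I_\eps-\textup{sign}(r)$, one of its derivatives, or $f'(u^A_\eps)=f'(\pm1+\Oc(e^{-c/\eps}))=\Oc(e^{-c/\eps})$, so $|r^A_\eps|\le Ce^{-c/\eps}\le C\eps^{M+1}$ there. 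Combining the three regions gives $|r^A_\eps|\le C(\eps^M e^{-c|\rho_\eps|}+\eps^{M+1})$.

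I expect the main obstacle to be the first step together with the non-degeneracy check in the second: expanding $\partial_t-\Delta$ in the moving tubular coordinates with the $\eps$-dependent stretching $r=\eps(\rho+h_\eps)$ and carefully bookkeeping which power of $\eps$ each resulting term belongs to, and then verifying that the solvability condition at each order genuinely is a (non-degenerate) linear parabolic equation for $h_{k+1}$ rather than an algebraic constraint. Both are routine but lengthy; a convenient organisation follows de Mottoni--Schatzman \cite{deMS} and Chen \cite{Chen}, or the variant adapted to the relative-entropy approach in \cite{FLS}, so only the structure is sketched here.
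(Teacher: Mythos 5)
Your sketch is correct and follows exactly the route the paper takes: the paper does not write out the construction but defers to the matched asymptotic expansions in Chen--Hilhorst--Logak, Abels--Liu and Moser, which are precisely the hierarchy (profile equation at order $\eps^{-2}$, solvability conditions yielding linearized mean curvature flow for the $h_j$, exponential decay of the correctors, and the three-region remainder estimate) that you outline. Apart from a harmless indexing convention for where the first corrector $u_2$ sits, there is nothing to object to.
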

\begin{proof}
    This essentially follows from the computations in Chen, Hilhorst, Logak \cite{CHL} and Abels, Liu \cite{ALiu}, see also the expansions in Moser \cite{MoserACvACND}.
\end{proof}

Finally, we recall a classical spectral estimate for the linearized local Allen-Cahn operator. 

\begin{Theorem}[\textbf{Spectral Estimate for Local Allen-Cahn Operator}]\label{th_local_spectral}
Let $\delta\in(0,\frac{\delta_0}{2}]$ and $h_\eps:\Sigma\times[0,T_0]\rightarrow\R$ be such that $h_\eps(.,t)$ is continuous for all $t\in[0,T_0]$ and $\|h_\eps\|_\infty\leq C_0$, where $\eps>0$ is small and $C_0>0$ is a fixed constant. Moreover, we define the scaled variable
\[
\rho_\eps:=\frac{r-\eps h_\eps(s,t)}{\eps} \quad \text{ in }\overline{\Gamma(2\delta)}.
\]
For $\eps>0$ small we consider $u^A_\eps:\T^n\times[0,T_0]\rightarrow\R$ measurable with the property
\begin{align*}
    u_\eps^A=\begin{cases}
        \theta_0(\rho_\eps) + \Oc(\eps^2) & \text{ in }\Gamma(\delta),\\
        \pm 1 + \Oc(\eps) & \text{ in }[\T^n\times(0,T_0)]\setminus\Gamma(\delta),
    \end{cases}
\end{align*}
where $\theta_0$ is the optimal profile from \eqref{eq_opt_profile}.

    Then there exist $\eps_0,\overline{C}>0$ independent of $h_\eps$ for fixed $C_0$ such that for all $\eps\in(0,\eps_0]$, $t\in[0,T_0]$ and all $\psi\in H^1(\T^n)$ it holds
    \[
    \int_{\T^n}|\nabla\psi|^2+ \frac{1}{\eps^2} f''(u_\eps^A(.,t))\psi^2\,dx \geq -\overline{C}\|\psi\|_{L^2(\T^n)}^2 + \|\nabla\psi\|_{L^2(\T^n\setminus\Gamma_t(\delta))}^2 + \|\nabla_\tau\psi\|_{L^2(\Gamma_t(\delta))}^2.
    \]
\end{Theorem}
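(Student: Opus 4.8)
The plan is to reduce the estimate, as in the classical arguments of Chen \cite{Chen}, Abels--Liu \cite{ALiu} and Moser \cite{MoserACvACND} (see also Chen--Hilhorst--Logak \cite{CHL}), to a one-dimensional spectral inequality in the normal direction of $\Gamma$. In $\Gamma_t(\delta)$ the tubular coordinates give $|\nabla\psi|^2 = |\nabla_\tau\psi|^2 + |\partial_n\psi|^2$, so the asserted estimate is equivalent to
\[
\int_{\Gamma_t(\delta)}|\partial_n\psi|^2\,dx + \frac{1}{\eps^2}\int_{\T^n}f''(u_\eps^A)\psi^2\,dx \geq -\overline{C}\|\psi\|_{L^2(\T^n)}^2 .
\]
Since $u_\eps^A = \pm 1 + \Oc(\eps)$ on $\T^n\setminus\Gamma_t(\delta)$ and $f''(\pm1)>0$, for $\eps$ small we have $f''(u_\eps^A)\geq c_0>0$ there, so $\frac{1}{\eps^2}\int_{\T^n\setminus\Gamma_t(\delta)}f''(u_\eps^A)\psi^2\,dx\geq 0$; hence it suffices to prove the displayed inequality with the potential integral restricted to $\Gamma_t(\delta)$.

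Next I would use the coordinates $x=X(r,s,t)$ on $\Gamma_t(\delta)$, with volume element $J\,dr\,ds$, $J$ smooth and bounded above and below, and rescale $r=\eps(\rho+h_\eps(s,t))$, setting $w_s(\rho):=\psi(X(\eps(\rho+h_\eps),s,t))$ on $I_{\eps,s}:=(-\tfrac{\delta}{\eps}-h_\eps,\tfrac{\delta}{\eps}-h_\eps)$, which exhausts $\R$ as $\eps\to0$. Using $\partial_n\psi=\tfrac{1}{\eps}w_s'$, $u_\eps^A=\theta_0(\rho)+\Oc(\eps^2)$ on $\Gamma_t(\delta)$ and $\|\psi\|_{L^2(\Gamma_t(\delta))}^2=\eps\int_\Sigma\int_{I_{\eps,s}}w_s^2 J\,d\rho\,ds$, one finds
\[
\int_{\Gamma_t(\delta)}|\partial_n\psi|^2\,dx + \frac{1}{\eps^2}\int_{\Gamma_t(\delta)}f''(u_\eps^A)\psi^2\,dx \;\geq\; \frac{1}{\eps}\int_\Sigma\int_{I_{\eps,s}}\big(|w_s'|^2+f''(\theta_0(\rho))\,w_s^2\big)J\,d\rho\,ds \;-\; C\|\psi\|_{L^2(\T^n)}^2 ,
\]
so it remains to show the one-dimensional estimate
\[
\int_{I_{\eps,s}}\big(|w_s'|^2+f''(\theta_0(\rho))\,w_s^2\big)J\,d\rho \;\geq\; -C\eps^2\int_{I_{\eps,s}}w_s^2 J\,d\rho
\]
uniformly in $s\in\Sigma$, $t\in[0,T_0]$ and small $\eps$; an error of order $\eps$ instead of $\eps^2$ would not be enough here.

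The spectral input is that differentiating the profile equation $-\theta_0''+f'(\theta_0)=0$ gives $(-\partial_\rho^2+f''(\theta_0))\theta_0'=0$; since $\theta_0'>0$ with exponential decay, $0$ is the bottom of the spectrum of $-\partial_\rho^2+f''(\theta_0)$ on $L^2(\R)$, i.e.\ $\int_\R(|v'|^2+f''(\theta_0)v^2)\,d\rho\geq0$ for all $v\in H^1(\R)$. I would then multiply $w_s$ by a cut-off $\tilde\zeta_\eps$ equal to $1$ on $\{|\rho|\leq\tfrac{\delta}{2\eps}-h_\eps\}$ and supported in $I_{\eps,s}$, so that $\tilde\zeta_\eps w_s\in H^1(\R)$ and $\|\tilde\zeta_\eps''\|_{\infty}=\Oc(\eps^2)$; on the transition region $f''(\theta_0(\rho))\geq c_0>0$, so that (after an integration by parts absorbing the cut-off error at order $\eps^2$, the transition integrand being nonnegative) it suffices to bound $\int_\R(|(\tilde\zeta_\eps w_s)'|^2+f''(\theta_0)(\tilde\zeta_\eps w_s)^2)J\,d\rho$ from below. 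For this I would use the ground-state substitution $\tilde\zeta_\eps w_s=\phi_\eps z$ with a refined approximate ground state $\phi_\eps=\theta_0'+\eps\phi_1$ solving $-\tfrac{1}{J}(J\phi_\eps')'+f''(\theta_0)\phi_\eps=\Oc(\eps^2)\phi_\eps$; the corrector $\phi_1$ exists and decays exponentially precisely because the solvability condition $\int_\R\theta_0'\theta_0''\,d\rho=\tfrac12\int_\R((\theta_0')^2)'\,d\rho=0$ holds, and the substitution yields $\int_\R(|(\tilde\zeta_\eps w_s)'|^2+f''(\theta_0)(\tilde\zeta_\eps w_s)^2)J\,d\rho=\int_\R\phi_\eps^2|z'|^2 J\,d\rho+\Oc(\eps^2)\int_\R(\tilde\zeta_\eps w_s)^2 J\,d\rho\geq -C\eps^2\int_{I_{\eps,s}}w_s^2 J\,d\rho$, which is exactly what was needed.

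The step I expect to be the main obstacle is this last one together with the uniformity: constructing the refined ground state $\phi_\eps$ with the correct positivity and decay, and carrying the cut-off, the Jacobian weight $J$ and the $\Oc(\eps^2)$-potential perturbation simultaneously, all uniformly in $s$, $t$ and $\eps$ — in particular exploiting the cancellation $\int_\R\theta_0'\theta_0''\,d\rho=0$ at the right place (without it one only gets an $\Oc(\eps)$-error, which is insufficient). Since this computation is by now standard, I would, if desired, instead simply invoke the corresponding statements of \cite{Chen}, \cite{ALiu} and \cite{MoserACvACND}.
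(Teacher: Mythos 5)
Your outline is correct and is essentially the paper's approach: the authors give no argument of their own here but simply cite Chen \cite{ChenSpectrums} and Abels--Liu \cite[Theorem 2.13]{ALiu}, whose proofs follow exactly the route you sketch (reduction to the one-dimensional operator $-\partial_\rho^2+f''(\theta_0)$ with ground state $\theta_0'$, and a refined approximate ground state exploiting $\int_\R\theta_0'\theta_0''\,d\rho=0$ to reach the $\Oc(\eps^2)$ error required after dividing by $\eps$). Your closing remark that one may simply invoke those references is precisely what the paper does.
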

\begin{proof}
    This goes back to Chen \cite{ChenSpectrums} and Abels, Liu \cite[Theorem 2.13]{ALiu}.
\end{proof}

\section{Properties of Solutions to the Nonlocal Allen-Cahn Equation}\label{sec_AC}
Let the assumptions \ref{ass_1}-\ref{ass_3} hold, $\eps>0$ and $\mathcal{L}_\eta$ be given as in \eqref{eq_nonlocal_operator}. In this section we consider the nonlocal Allen-Cahn equation \eqref{eq_ACnonloc_1}-\eqref{eq_ACnonloc_2} and show existence, uniqueness, higher regularity and boundedness of solutions in Section \ref{sec_nonlocAC_existence} and uniform $H^3$-bounds for solutions for sufficiently small $\eps$ and $\eta\leq\eta(\eps)$ in Section \ref{sec_nonlocAC_bound}.

\subsection{Existence, Uniqueness and Maximum Principle}\label{sec_nonlocAC_existence}

\begin{Lemma}[\textbf{Uniqueness}]\label{th_uniqueness}
    Let the assumptions \ref{ass_1}-\ref{ass_3} hold, $T>0$ and $\eps>0$. For initial values in $H^1(\T^n)$, solutions of \eqref{eq_ACnonloc_1}-\eqref{eq_ACnonloc_2} with the regularity $H^1(0,T;L^2(\T^n)) \cap L^\infty(0,T;H^1(\T^n))$ are unique.
\end{Lemma}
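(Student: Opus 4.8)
The plan is to use the standard energy method for the difference of two solutions. Let $c_1,c_2$ be two solutions of \eqref{eq_ACnonloc_1}-\eqref{eq_ACnonloc_2} with the stated regularity and the same initial datum $c_0\in H^1(\T^n)$, and set $w:=c_1-c_2$. Then $w\in H^1(0,T;L^2(\T^n))\cap L^\infty(0,T;H^1(\T^n))$, $w|_{t=0}=0$, and
\begin{align*}
    \partial_t w + \Lc_\eta w + \frac{1}{\eps^2}\big(f'(c_1)-f'(c_2)\big) = 0 \quad\text{ in }\T^n\times(0,T).
\end{align*}
First I would test this equation with $w(t)\in L^2(\T^n)$ (which is admissible since $\partial_t w\in L^2(\T^n)$ a.e.\ in time and the other terms are in $L^2$ as well — here the $L^\infty(0,T;H^1)$ regularity together with $f'$ having at most cubic growth, cf.~\ref{ass_3}, and the Sobolev embedding $H^1(\T^n)\hookrightarrow L^6(\T^n)$ for $n\le 3$, guarantee $f'(c_i)\in L^2$). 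This yields, for a.e.\ $t\in(0,T)$,
\begin{align*}
    \frac{1}{2}\frac{d}{dt}\|w(t)\|_{L^2(\T^n)}^2 + 2\Ec_\eta(w(t)) + \frac{1}{\eps^2}\int_{\T^n}\big(f'(c_1)-f'(c_2)\big)w\,dx = 0,
\end{align*}
where I used the symmetry of $\tilde J_\eta$ to identify $\int_{\T^n}\Lc_\eta w\cdot w\,dx = 2\Ec_\eta(w)\ge 0$.

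The next step is to control the potential term. By the mean value theorem, $f'(c_1)-f'(c_2) = f''(\xi)\,w$ for some intermediate value $\xi$, and since $c_1,c_2\in L^\infty(0,T;H^1(\T^n))$ — note this must be upgraded to an $L^\infty$-in-space bound to make $f''(\xi)$ bounded, which for $n\le3$ is \emph{not} automatic from $H^1$; more robustly one writes $f'(c_1)-f'(c_2)=\big(\int_0^1 f''(c_2+\tau w)\,d\tau\big)w$ and estimates $\big|\int_{\T^n}(f'(c_1)-f'(c_2))w\big|\le C\int_{\T^n}(1+|c_1|^2+|c_2|^2)w^2\le C(1+\|c_1\|_{L^6}^2+\|c_2\|_{L^6}^2)\|w\|_{L^3}^2$ using $f''$ of at most quadratic growth and Hölder — and then interpolates $\|w\|_{L^3}^2\le C\|w\|_{L^2}^{2-n/3}\|w\|_{H^1}^{n/3}$ or simply uses $\|w\|_{L^3}^2 \le \|w\|_{L^2}\|w\|_{L^6}\le C\|w\|_{L^2}\|w\|_{H^1}$. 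However, I expect the cleanest route, avoiding any Gronwall involving $\Ec_\eta$, is to instead exploit the one-sided bound $f''\ge -\alpha$ from \ref{ass_3}: then $\int_{\T^n}(f'(c_1)-f'(c_2))w\,dx = \int_{\T^n}\big(\int_0^1 f''(c_2+\tau w)\,d\tau\big)w^2\,dx \ge -\alpha\|w\|_{L^2(\T^n)}^2$, with no growth issues at all. Dropping the nonnegative term $2\Ec_\eta(w)$ gives
\begin{align*}
    \frac{d}{dt}\|w(t)\|_{L^2(\T^n)}^2 \le \frac{2\alpha}{\eps^2}\|w(t)\|_{L^2(\T^n)}^2.
\end{align*}

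Finally, since $t\mapsto\|w(t)\|_{L^2(\T^n)}^2$ is absolutely continuous on $[0,T]$ (as $w\in H^1(0,T;L^2)$) and vanishes at $t=0$, Gronwall's lemma forces $\|w(t)\|_{L^2(\T^n)}^2 \le e^{2\alpha t/\eps^2}\cdot 0 = 0$ for all $t\in[0,T]$, hence $c_1=c_2$. The only genuinely delicate point is the justification that $w(t)$ is an admissible test function and that the identity $\langle\partial_t w,w\rangle = \tfrac12\tfrac{d}{dt}\|w\|_{L^2}^2$ holds in the a.e.\ sense — this is the standard Lions--Magenes lemma for the Gelfand triple $H^1(\T^n)\hookrightarrow L^2(\T^n)\hookrightarrow H^{-1}(\T^n)$, or here even more directly since $\partial_t w\in L^2(0,T;L^2(\T^n))$ — together with checking that each term in the PDE indeed lies in $L^2(\T^n)$ for a.e.\ $t$, for which the growth assumption $|f'(r)|\le C(1+|r|^3)$ combined with $w\in L^\infty(0,T;H^1(\T^n))\hookrightarrow L^\infty(0,T;L^6(\T^n))$ for $n\in\{2,3\}$ is exactly what is needed.
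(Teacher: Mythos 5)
Your proposal is correct and follows essentially the same route as the paper: testing the equation for the difference with the difference itself, using the nonnegativity of the nonlocal form $\Ec_\eta$ and the one-sided bound $f''\geq-\alpha$ from \ref{ass_3} to control the potential term, and concluding with Gronwall. The additional justifications you give (admissibility of the test function, $f'(c_i)\in L^2$ via the cubic growth bound and $H^1(\T^n)\hookrightarrow L^6(\T^n)$) match the paper's brief remarks and do not change the argument.
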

\begin{proof}
Let $c_0\in H^1(\T^n)$ and $c_1, c_2\in H^1(0,T;L^2(\T^n))\cap L^\infty(0,T;H^1(\T^n))$ be solutions of 
\begin{alignat*}{2}
\partial_tc+\Lc_\eta c+f'(c)&=0 &\quad & \text{ in }\T^n\times(0,T),\\
c|_{t=0} &= c_0 &\quad &\text{ in }\T^n.
\end{alignat*}
Then, $\tilde{c} := c_1-c_2$ solves 
\begin{alignat*}{2}
    \partial_t\tilde{c}+\Lc_\eta\tilde{c} + f'(c_1) - f'(c_2) &=0 &\quad&\text{ in }\T^n\times(0,T), \\
    \tilde{c}|_{t=0} &= 0 &\quad&\text{ in }\T^n.
\end{alignat*}
Testing the latter by $\tilde{c}$ gives
\begin{align*}
    \frac{1}{2}\frac{d}{dt}\|\tilde{c}\|_{L^2(\T^n)}^2 + \Ec_\eta(\tilde{c}) + \int_{\T^n}\big(f^\prime(c_1) - f^\prime(c_2)\big)\tilde{c}\,\text{d}x = 0,
\end{align*}
where we note that $f'(c_i)\in L^2(0,T;L^2(\T^n))$ for $i=1,2$ because of \ref{ass_3} and embeddings. Moreover, the growth assumption of $f$ from \ref{ass_3} yields
\begin{align*}
    \int_{\T^n}\big(f^\prime(c_1) - f^\prime(c_2)\big)\tilde{c}\,\text{d}x \geq -\alpha\|\tilde{c}\|_{L^2(\T^n)}^2.
\end{align*}
Therefore, it holds 
\begin{align*}
    \frac{1}{2}\frac{d}{dt}\|\tilde{c}\|_{L^2(\T^n)}^2 + \Ec_\eta(\tilde{c}) \leq  \alpha\|\tilde{c}\|_{L^2(\T^n)}^2
\end{align*}
and hence the Gronwall Lemma yields $c_1=c_2$.
\end{proof}

\begin{Theorem}[\textbf{Existence, Higher Regularity and Boundedness}]\label{th_existence}
    Let the assumptions \ref{ass_1}-\ref{ass_3} hold, $T>0$ and $\eps>0$. Moreover, let $c_{0,\eps}\in H^1(\T^n)$. Then, there exists a unique solution $c_\eps$ to \eqref{eq_ACnonloc_1}--\eqref{eq_ACnonloc_2} with the regularity 
    \begin{align*}
       c_\eps \in H^1(0,T;L^2(\T^n))\cap L^\infty(0,T;H^1(\T^n))\cap C^0([0,T],L^p(\T^n))
    \end{align*}
    for all $p\in[1,\infty)$ if $n=2$ and $p\in[1,6)$ if $n=3$.
    Additionally, if $c_{0,\eps}\in H^3(\T^n)$ and $|c_{0,\eps}|\leq R_0$ with $R_0\geq 1$ as in \ref{ass_3}, then the solution $c_\eps$ has the regularity
    \begin{align*}
        c_\eps \in H^1(0,T;L^2(\T^n))\cap L^\infty(0,T;H^3(\T^n)) \cap C^1([0,T],C^0(\T^n))\cap C^0([0,T],H^2(\T^n))
    \end{align*}
    and $|c_\eps|$ is uniformly bounded by $R_0$.
\end{Theorem}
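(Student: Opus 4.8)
The plan is a Fourier--Galerkin approximation. Let $V_m:=\operatorname{span}\{e^{ik\cdot x}:k\in\Z^n,\,|k|\le m\}$ with $L^2$-orthogonal projection $P_m$. The key structural facts I would use throughout are that $\Lc_\eta$ is the Fourier multiplier with symbol $\widehat{J_\eta}(0)-\widehat{J_\eta}(k)$, which lies in $[0,2\widehat{J_\eta}(0)]$; hence (for the \emph{fixed} $\eta$ of the statement) $\Lc_\eta$ is bounded on $L^2(\T^n)$ with some norm $C_\eta<\infty$, it commutes with $\Delta$ and maps $V_m$ into itself, and its associated bilinear form is nonnegative. Solving $P_m\partial_tc^m+\Lc_\eta c^m+\frac1{\eps^2}P_mf'(c^m)=0$, $c^m(0)=P_mc_{0,\eps}$ on $V_m$ gives a local solution. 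For uniform-in-$m$ bounds I would: test with $c^m$ and use $\langle\Lc_\eta c^m,c^m\rangle\ge0$ together with $f'(r)r\ge-C$ (valid by the sign condition in \ref{ass_3}) to get $c^m$ bounded in $L^\infty(0,T;L^2(\T^n))$; test with $-\Delta c^m$ and use $\langle\Lc_\eta c^m,-\Delta c^m\rangle\ge0$ and $\int f''(c^m)|\nabla c^m|^2\ge-\alpha\|\nabla c^m\|_{L^2}^2$ to get $\frac{d}{dt}\|\nabla c^m\|_{L^2}^2\le\frac{2\alpha}{\eps^2}\|\nabla c^m\|_{L^2}^2$, hence by Gronwall and $\|\nabla c^m(0)\|_{L^2}\le\|\nabla c_{0,\eps}\|_{L^2}$ a bound in $L^\infty(0,T;H^1(\T^n))$; and then read $\partial_tc^m$ off the equation, using boundedness of $\Lc_\eta$, the cubic growth of $f'$ and $H^1(\T^n)\hookrightarrow L^6(\T^n)$, to get a bound in $L^\infty(0,T;L^2(\T^n))$. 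These bounds also prevent finite-time blow-up of the ODE, so $c^m$ exists on $[0,T]$.

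\textbf{Passage to the limit, continuity, uniqueness.} Along a subsequence, $c^m\overset{\ast}{\rightharpoonup}c$ in $L^\infty(0,T;H^1)$ and $\partial_tc^m\rightharpoonup\partial_tc$ in $L^2(0,T;L^2)$, and by Aubin--Lions--Simon $c^m\to c$ in $C^0([0,T];L^2(\T^n))$ and a.e.; since $\{f'(c^m)\}$ is bounded in $L^\infty(0,T;L^2)$ (cubic growth plus the $L^6$-bound), Vitali's theorem gives $f'(c^m)\rightharpoonup f'(c)$ in $L^2(0,T;L^2)$, and passing to the limit in the Galerkin identity (using $\Lc_\eta$ bounded linear and $P_m\phi\to\phi$) shows $c$ solves \eqref{eq_ACnonloc_1}, with $c(0)=c_{0,\eps}$ from $P_mc_{0,\eps}\to c_{0,\eps}$. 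From $c\in L^\infty(0,T;H^1)\cap H^1(0,T;L^2)$ one gets $c\in C^0([0,T];L^2)$; interpolating $\|c(t)-c(s)\|_{L^p}\le\|c(t)-c(s)\|_{L^2}^{1-\theta}\|c(t)-c(s)\|_{L^6}^{\theta}$ against the $L^\infty(0,T;L^6)$-bound (resp.\ the $L^\infty(0,T;L^q)$-bound for every $q<\infty$ when $n=2$) yields $c\in C^0([0,T];L^p(\T^n))$ in the stated $p$-range. Uniqueness is precisely Lemma \ref{th_uniqueness}.

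\textbf{Maximum principle.} Now let additionally $c_{0,\eps}\in H^3(\T^n)$ and $|c_{0,\eps}|\le R_0$. Testing \eqref{eq_ACnonloc_1} with $w:=(c_\eps-R_0)_+$ (admissible, since $w\in L^\infty(0,T;H^1)$ and $\partial_tw=\chi_{\{c_\eps>R_0\}}\partial_tc_\eps\in L^2(0,T;L^2)$) I would symmetrise the nonlocal term to $\tfrac12\int_{\T^n}\int_{\T^n}\tilde J_\eta(x-y)\big(c_\eps(x)-c_\eps(y)\big)\big(w(x)-w(y)\big)\,dy\,dx$, which is $\ge0$ because $t\mapsto(t-R_0)_+$ is nondecreasing and $1$-Lipschitz; also $\int_{\T^n}f'(c_\eps)\,w\,dx\ge0$ since $f'>0$ on $(R_0,\infty)$. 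Hence $\frac{d}{dt}\|w\|_{L^2}^2\le0$ with $w(0)=0$, so $c_\eps\le R_0$; testing with $-(c_\eps+R_0)_-$ gives $c_\eps\ge-R_0$.

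\textbf{Higher regularity.} With $|c_\eps|\le R_0$ all $f^{(j)}(c_\eps)$ are bounded, so (using $H^3(\T^n)\hookrightarrow W^{1,\infty}(\T^n)$ and that $H^3(\T^n)$ is a Banach algebra for $n\le3$) the map $c\mapsto-\Lc_\eta c-\eps^{-2}f'(c)$ is locally Lipschitz $H^3(\T^n)\to H^3(\T^n)$; the Cauchy--Lipschitz theorem then yields a maximal solution $\tilde c\in C^1([0,T_{\max});H^3(\T^n))$, which by Lemma \ref{th_uniqueness} equals $c_\eps$. To reach $T_{\max}\ge T$ I would prove a priori $H^3$-bounds: for $k=1,2,3$ differentiate \eqref{eq_ACnonloc_1} $k$ times in space, test with $D^kc_\eps$, drop the nonnegative nonlocal contribution, expand $D^kf'(c_\eps)$ by the Faà~di~Bruno formula, and estimate the lower-order factors by Gagliardo--Nirenberg using the $L^\infty$-bounds; a successive Gronwall argument then gives $c_\eps\in L^\infty(0,T;H^3(\T^n))$ and, since $\tilde c=c_\eps$, even $c_\eps\in C^1([0,T];H^3(\T^n))$, from which $c_\eps\in C^1([0,T];C^0(\T^n))$ and $c_\eps\in C^0([0,T];H^2(\T^n))$ follow by Sobolev embedding, and $c_\eps\in H^1(0,T;L^2(\T^n))$ from $\partial_tc_\eps\in C^0([0,T];H^3(\T^n))$. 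I expect the main obstacle to be exactly this uniform-in-time $H^3$-estimate — excluding finite-time blow-up of $\|c_\eps(t)\|_{H^3}$ against the $\eps^{-2}$-prefactor — and it is the maximum principle that makes it tractable, by reducing all composition terms $f^{(j)}(c_\eps)$ to $L^\infty$-bounded coefficients.
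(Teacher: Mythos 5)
Your proposal is correct and shares the Fourier--Galerkin skeleton with the paper for the basic $H^1$-existence part, but it departs from the paper's proof in two substantive ways, both of which are legitimate. First, you prove the bound $|c_\eps|\leq R_0$ by Stampacchia truncation, testing with $(c_\eps-R_0)_+$ and using the monotonicity of $t\mapsto(t-R_0)_+$ to make the symmetrised nonlocal term nonnegative; the paper instead runs a classical pointwise maximum-principle argument, which requires first knowing $c_\eps\in C^1([0,T];C^0(\T^n))$, and consequently proves boundedness \emph{after} higher regularity while already invoking it there (via the ``modify $f$ outside $[-R_0,R_0]$ without loss of generality'' device, justified only a posteriori). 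Your ordering --- truncation at the $H^1(0,T;L^2)\cap L^\infty(0,T;H^1)$ level first, then regularity --- removes that logical detour. Second, for the higher regularity you treat \eqref{eq_ACnonloc_1} as a semilinear ODE in $H^3(\T^n)$, exploiting that $\Lc_\eta$ is a bounded Fourier multiplier (no loss of derivatives) and that $H^3(\T^n)$ is a Banach algebra for $n\leq 3$, and then exclude blow-up by a priori energy estimates at the PDE level; the paper instead re-runs the Galerkin scheme, testing with $\Delta^2 c^N$ and $\Delta^3 c^N$ and inserting an intermediate $L^\infty(0,T;L^p)$-bound for $\nabla c^N$ obtained by testing the differentiated equation with $P_N(|\nabla c^N|^{p-2}\nabla c^N)$. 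Your route is shorter and avoids that bootstrap entirely, since the $L^\infty$-bound on $c_\eps$ reduces all composition terms $f^{(j)}(c_\eps)$ to bounded coefficients and Gagliardo--Nirenberg (e.g.\ $\|\nabla c_\eps\|_{L^4}^2\leq C\|c_\eps\|_{L^\infty}\|c_\eps\|_{H^2}$) closes the Gronwall argument at each order --- exactly the mechanism the paper itself uses later in Theorem \ref{th_H^3_estimate}. One cosmetic remark: local Lipschitz continuity of $c\mapsto f'(c)$ on $H^3(\T^n)$ holds for any smooth $f$ by the algebra property and does not require the $L^\infty$-bound; the latter is only needed for the global-in-time a priori estimate, as you correctly emphasise at the end.
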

\begin{proof}
    For simplicity we set $\eps=1$ in the proof and omit the $\eps$ in the notation. All the arguments can be done analogously for the case of arbitrary $\eps>0$. Moreover, $\eta>0$ is fixed in the following.
    
    We prove this theorem using a Galerkin ansatz, in particular we project the equations and solution spaces onto finite dimensional subspaces of $H^1(\T^n)$. 
    For the approximation scheme, we consider the finite-dimensional subspaces 
    \[
    V_N := \text{span}\{1, \cos(k\cdot.), \sin(k\cdot.): k=(k_1,\ldots,k_n)\in\Z^n, |k_1|,\ldots,|k_n|\leq N\}, \quad N\in\N,
    \]
    generated by trigonometric functions. One can directly show that the Laplacian is diagonizable on $V_N$ and we denote the eigenfunctions by $w_1,...,w_{\dim V_N}$. Since $V_{N_1}\subseteq V_{N_2}$ for $N_1\leq N_2$ this is consistent for all $N\in\N$ and it is well-known that $(w_i)_{i\in\N}$ form an orthonormal basis of $L^2(\T^n)$. We denote with $(\lambda_i)_{i\in\N}$ the corresponding eigenvalues. Moreover, note that we have the following representation of the $L^2$-projection for (real-valued) $f\in L^2(\T^n)$ and all $N\in\N$:
    \begin{align}\label{eq_projection}
        P_N f:=P^{L^2}_{V_N}f=S_Nf=D_N\ast f,
    \end{align}
    where 
    \[
    S_Nf:=\sum_{k\in\Z^n,|k_1|,...,|k_N|\leq N}\frac{1}{(2\pi)^n}\hat{f}_k e^{i k\cdot .}\quad\text{ and }\quad D_N:=\sum_{k\in\Z^n,|k_1|,...,|k_N|\leq N}e^{i k\cdot.}
    \]
    for all $N\in\N$ and $f\in L^1(\T^n)$. Properties of $S_N$, especially for convergence, are well-known from Fourier-Analysis, cf. \cite{Grafakos}. 
    Recalling that convolution operators commute, we observe that $\mathcal{L}_\eta P_N u = P_N\mathcal{L}_\eta u$ for all $u\in L^2(\T^n)$. Moreover, we note that $\sup_{N\in\N}\|P_N\|_{\mathcal{L}(L^p(\T^n))}< \infty$, cf. \cite[Theorem 4.1.8]{Grafakos}.
    In particular, $\textup{span}\{w_i:i\in\N\}$ is dense in $H^k(\T^n)$ for all $k\in\N$. Finally, for vector-valued $f\in L^2(\T^n)^m, m\in\N$ equation \eqref{eq_projection} is understood component-wise.\\
    \newline
    1. \textbf{Finite dimensional approximation.} In this step, we determine approximate solutions $c^N$ of \eqref{eq_ACnonloc_1}--\eqref{eq_ACnonloc_2} of the form
    \begin{align*}
        c^N = \sum_{i=1}^N g_i^Nw_i,
    \end{align*}
    where $\mathbf{g}^N := (g_i^N)_{i=1}^N : [0,T] \rightarrow \R$ will be continuously differentiable. In particular, it holds $c^N(t) \in V_N$ for all $t\in[0,T]$. We will construct $\mathbf{g}^N$ such that $c^N$ is a solution of system 
    \begin{align}\label{eq_approx_system}
        \int_{\T^n}\partial_t c^N(t) w_i\,\text{d}x + \int_{\T^n}\Lc_\eta c^N(t) w_i\,\text{d}x + \int_{\T^n}f^\prime(c^N(t))w_i\,\text{d}x = 0
    \end{align}
    for all $t\in[0,T]$ and $i=1,\ldots,N$ together with the initial condition
    \begin{align}\label{eq_approx_initial}
        c^N(0) = P_Nc_0=\sum_{i=1}^N(c_0,w_i)_{L^2(\T^n)}w_i.
    \end{align}
    Since the functions $(w_i)_{i\in\N}$ are an orthonormal basis of $L^2(\T^n)$, the approximate system \eqref{eq_approx_system}-\eqref{eq_approx_initial} is equivalent to the following ordinary differential equation for $\mathbf{g}^N$:
    \begin{align}\label{eq_Ode1}
        \frac{d}{dt}g_i^N(t) = -\sum_{k=1}^Ng_k^N(t)\int_{\T^n}\Lc_\eta w_k w_i\,\text{d}x - \int_{\T^n}f^\prime\left(\sum_{k=1}^N g_k^N(t)w_k\right)w_i\,\text{d}x
    \end{align}
    together with the initial condition
    \begin{align}\label{eq_Ode2}
        g_i^N(0) = (c_0,w_i)_{L^2}
    \end{align}
    for all $i=1,\ldots,N$. Since the right-hand side in \eqref{eq_Ode1} depends continuously on $\mathbf{g}^N$, Peano's Theorem guarantees the existence of a local $C^1$-solution of this initial value problem on a right-maximal interval $[0,T_N^*)\cap [0,T]$ for some $T_N^*>0$. In the next two steps, we will derive an energy estimate and show global existence of the Galerkin approximation. Therefore we show uniform bounds on $[0,T_N]$ for $T_N\in [0,T_N^*)\cap [0,T]$ arbitrary.\\
\newline
2. \textbf{Energy estimate.} First of all, we define 
\begin{align*}
    E^N(t) := \frac{1}{4}\int_{\T^n}\int_{\T^n}J_\eta(x-y)\big|c^N(x,t) - c^N(y,t)\big|^2\,\text{d}y\,\text{d}x + \int_{\T^n}f(c^N(t))\,\text{d}x
\end{align*}
for all $t\in[0,T_N]$. Then, we compute
\begin{align*}
    \frac{d}{dt}E^N(t) 
    &= \frac{1}{2}\int_{\T^n}\int_{\T^n}J_\eta(x-y)\big(c^N(x,t)-c^N(y,t)\big)\big(\partial_t c^N(x,t)-\partial_t c^N(y,t)\big)\,\text{d}y\,\text{d}x \\
    &+ \int_{\T^n}f^\prime(c^N(t))\partial_t c^N(t)\,\text{d}x = -\int_{\T^n}\big|\partial_t c^N(t)\big|^2\,\text{d}x \leq 0,
\end{align*}
where we used that $g_1^N,\ldots,h_N^N$ are $C^1$-functions and for the second equality we multiplied \eqref{eq_approx_system} by $(g_i^N)'(t)$ and summed over all $i=1,\ldots,N$. Integrating the inequality above with respect to time yields for all $s\in[0,T_N]$ and all $N\in\mathbb{N}$
\begin{align}
    E^N(s) + \int_0^s\int_{\T^n}\left|\partial_t c^N(x,t)\right|^2\,\text{d}x\,\text{d}t \leq E^N(0).
\end{align}
In order to establish uniform bounds for our solutions, we need to verify that $E^N(0)$ can be bounded by some constant $C > 0$, which does not depend on $N$.

First of all, because of $c_0\in H^1(\T^n)$, the initial condition \eqref{eq_approx_initial} and the convolution representation in \eqref{eq_projection} it follows that the sequence $(c^N(0))_{N\in\mathbb{N}} \subseteq H^1(\T^n)$ is bounded. By \cite[Theorem 1]{BourgainBrezisMironescu}, there exists a constant $C>0$ such that 
\begin{align*}
     \frac{1}{4}\int_{\T^n}\int_{\T^n}J_\eta(x-y)\big|c^N(x,0) - c^N(y,0)\big|^2\,\text{d}y\,\text{d}x = \Ec_\eta(c^N(0)) \leq C\|\nabla c^N(0)\|_{L^2(\T^n)}^2.
\end{align*}
Moreover, the assumptions for $f$ from \ref{ass_3} and the Sobolev embedding theorem imply that
\begin{align*}
    \int_{\T^n}f(c^N(0))\,\text{d}x \leq C\left(1+\|c^N(0)\|_{L^4(\T^n)}^4\right)
\end{align*}
is bounded uniformly in $N\in\mathbb{N}$. Finally, we obtain the following energy estimate
\begin{align}\label{energy_estimate}
    E^N(s) + \int_0^s\int_{\T^n}\big|\partial_t c^N(x,t)\big|^2\,\text{d}x\text{d}t \leq E^N(0) \leq C
\end{align}
for all $s\in[0,T_N]$ and all $N\in\mathbb{N}$, where $C$ is independent of $T_N\in [0,T_N^*)\cap [0,T]$ and $N\in\N$.\\
\newline
3. \textbf{Uniform estimates.} From the energy inequality \eqref{energy_estimate} we obtain the uniform estimate $\|\partial_t c^N\|_{L^2(0,T_N;L^2(\T^n))} \leq C$. In the next step we prove that $(c^N)_{N\in\mathbb{N}}\subseteq L^\infty(0,T_N;H^1(\T^n))$ is bounded. To this end, we first test \eqref{eq_approx_system} by $\Delta c^N$. More precisely, we multiply \eqref{eq_approx_system} by $\lambda_i g_i^N(t)$ and sum over all $i=1,\ldots,N$. This yields
\begin{align*}
    \frac{1}{2}\frac{d}{dt}\|\nabla c^N\|_{L^2(\T^n)^n}^2 + \frac{1}{4}\int_{\T^n}\int_{\T^n}J_\eta(x-y)\big|\nabla c^N(x,t) - \nabla c^N(y,t)\big|^2\,\text{d}y\,\text{d}x \\
    + \int_{\T^n}\nabla f'(c^N)\cdot\nabla c^N\,\text{d}x = 0.
\end{align*}
Due to the assumption on $f''$ from \ref{ass_3}, it holds
\begin{align*}
    \int_{\T^n}\nabla f^\prime(c^N)\cdot\nabla c^N\,\text{d}x \geq -\alpha\|\nabla c^N\|_{L^2(\T^n)^n}^2.
\end{align*}
Thus, we can apply Gronwall's inequality to conlude that $\big(\nabla c^N\big)_{N\in\mathbb{N}} \subseteq L^\infty(0,T_N;L^2(\T^n)^n)$ is bounded. Next, we test \eqref{eq_approx_system} by $c^N$. On the Galerkin level, we multiply \eqref{eq_approx_system} by $g_i^N(t)$ and sum over all $i=1,\ldots,N$. This yields
\begin{align*}
    \frac{1}{2}\frac{d}{dt}\|c^N\|_{L^2(\T^n)}^2 + \frac{1}{4}\int_{\T^n}\int_{\T^n}J_\eta(x-y)\big|c^N(x,t) - c^N(y,t)\big|^2\,\text{d}y\,\text{d}x + \int_{\T^n}f^\prime(c^N)c^N\,\text{d}x=0.
\end{align*}
In order to control the third term on the right-hand side, we use the growth assumption of $f$ from \ref{ass_3}, the Hölder inequality as well as the Gagliardo-Nirenberg inequality. Hence we obtain
\begin{align*}
    \int_{\T^n}f^\prime(c^N)c^N\,\text{d}x &\leq \|f^\prime(c^N)\|_{L^2(\T^n)}\|c^N\|_{L^2(\T^n)} \\
    &\leq C\|c^N\|_{L^2(\T^n)} + C\|c^N\|_{L^6(\T^n)}^2\|c^N\|_{L^2(\T^n)} \\ 
    &\leq C\|c^N\|_{L^2(\T^n)} + C\|\nabla c^N\|_{L^2(\T^n)^n}^{2\frac{n}{3}}\|c^N\|_{L^2(\T^n)}^{3-2\frac{n}{3}}.
\end{align*}
Since $\big(\nabla c^N\big)_{N\in\mathbb{N}}$ is bounded in $L^\infty(0,T_N;L^2(\T^n)^n)$, we apply Young's inequality and obtain from Gronwall that $(c^N)_{N\in\mathbb{N}}\subseteq L^\infty(0,T_N;L^2(\T^n))$ is bounded. 

Altogether, the sequence $(c^N)_{N\in\mathbb{N}}$ is bounded in $C^0([0,T_N],H^1(\T^n))\cap H^1(0,T_N;L^2(\T^n))$ uniformly for all $T_N\in [0,T_N^*)\cap [0,T]$ and $N\in\N$. In particular, this yields that the solution $\mathbf{g}^N$ is bounded on $[0,T_N]$ uniformly for all $T_N\in[0,T_N^\ast)\cap[0,T]$ and all $N\in\N$. An extension argument yields that $\mathbf{g}^N$ and hence $c^N$ exist globally on $[0,T]$ and $T_N=T$ can be chosen above.\\
\newline
4. \textbf{Passage to the limit.} Due to the uniform estimates derived in the step before, there exists a subsequence, again denoted by $(c^N)_{N\in\mathbb{N}}$, such that for $N\rightarrow\infty$
\begin{align*}
    c^N &\rightharpoonup c &&\text{ in }H^1(0,T;L^2(\T^n)),\\
    c^N &\rightarrow c &&\text{ in }C^0([0,T];L^p(\T^n)),
\end{align*}
for some $c\in H^1(0,T;L^2(\T^n))\cap C^0([0,T];L^p(\T^n))$ and for all $p\in[1,\infty)$ if $n=2$ and all $p\in[1,6)$ if $n=3$,
where the last convergence follows by the Aubin-Lions-Simon Lemma. We need to show equation \eqref{eq_ACnonloc_1} for $c$.

Let $\xi\in C^\infty_0(0,T)$. We multiply \eqref{eq_approx_system} by $\xi(t)$ and integrate with respect to time. Hence
\begin{align*}
    \int_0^T\int_{\T^n}\partial_tc^Nw_i\xi\,\text{d}x\,\text{d}t + \int_0^T\int_{\T^n}\Lc_\eta c^Nw_i\xi\,\text{d}x\,\text{d}t + \int_0^T\int_{\T^n}f^\prime(c^N)w_i\xi \,\text{d}x\,\text{d}t = 0.
\end{align*}
For the first term on the right-hand side, we use the weak convergence $c^N \rightharpoonup c$ in $H^1(0,T;L^2(\T^n))$ for $N\rightarrow\infty$ to conclude that 
\begin{align*}
     \int_0^T\int_{\T^n}\partial_tc^Nw_i\xi\,\text{d}x\,\text{d}t \rightarrow  \int_0^T\int_{\T^n}\partial_tc\,w_i\xi\,\text{d}x\,\text{d}t.
\end{align*}
For the second term we prove in the following that $\Lc_\eta c^N \rightarrow \Lc_\eta c$ in $L^2(0,T;L^2(\T^n))$ for $N\rightarrow\infty$. By definition and Young's convolution inequality, we have
\begin{align*}
    \|&\Lc_\eta c^N - \Lc_\eta c\|_{L^2(0,T;L^2(\T^n))}^2 \\
    &\leq \int_0^T\|J_\eta * (c^N(t)-c(t))\|_{L^2(\T^n)}^2\,\text{d}t+ \int_0^T\|(J_\eta * 1)(c^N(t)-c(t))\|_{L^2(\T^n)}^2\,\text{d}t \\
    &\leq \int_0^T\|J_\eta\|_{L^1(\T^n)}^2\|(c^N(t)-c(t))\|_{L^2(\T^n)}^2\,\text{d}t + \int_0^T\|(J_\eta * 1)(c^N(t)-c(t))\|_{L^2(\T^n)}^2\,\text{d}t.
\end{align*}
Since the term $J_\eta*1$ is constant on the torus, the assertion follows from the strong convergence $c^N\rightarrow c$ in $C^0([0,T];L^2(\T^n))$ for $N\rightarrow\infty$. Altogether, this implies for $N\rightarrow\infty$ that
\begin{align*}
    \int_0^T\int_{\T^n}\Lc_\eta c^Nw_i\xi\,\text{d}x\,\text{d}t \rightarrow \int_0^T\int_{\T^n}\Lc_\eta c\,w_i\xi\,\text{d}x\,\text{d}t.
\end{align*}
Finally, for the last term we apply the General Lebesgue Dominated Convergence Theorem. Due to the growth assumption of $f'$ from \ref{ass_3} it holds
\begin{align*}
    |f^\prime(c^N)w_i| \leq C|w_i|\left(1+|c^N|^3\right) =: h_N.
\end{align*}
We will show $h_N\rightarrow h:= C|w_i|\big(1+|c|^3\big)$ in $L^1(\T^n\times(0,T))$. We have 
\begin{align*}
    \|h_N-h\|_{L^1(\T^n\times(0,T))} = C\int_0^T\int_{\T^n}|w_i|\Big||c^N|^3-|c|^3\Big|\,\text{d}x\,\text{d}t.
\end{align*}
Defining the function $b:\R^+\rightarrow\R^+:x\mapsto x^3$, the mean value theorem guarantees the existence of some $\mu(x,t)\in(0,1)$ such that for $\zeta(x,t) := \mu(x,t)|c(x,t)| + (1-\mu(x,t))|c^N(x,t)|$ it holds
\begin{align*}
    \left||c^N(x,t)|^3-|c(x,t)|^3\right| = |b^\prime(\zeta(x,t))|\;\left||c^N(x,t)|-|c(x,t)|\right|
\end{align*}
for a.e.~$(x,t)\in\T^n\times(0,T)$. This yields
\begin{align*}
    \|h_N-h\|_{L^1(\T^n\times(0,T))} &= C\int_0^T\int_{\T^n}|w_i|\Big||c^N|^3-|c|^3\Big|\,\text{d}x\,\text{d}t\\
    &\leq \tilde{C}\int_0^T\int_{\T^n}|w_i|\Big(|c^N|^2 + |c|^2\Big)\big||c^N|-|c|\big|\,\text{d}x\,\text{d}t\\
    &\leq \tilde{C}\|w_i\|_{L^6(\T^n)}\int_0^T\Big(\|c^N\|_{L^4(\T^n)}^2 + \|c\|_{L^4(\T^n)}^2\Big)\|c^N - c\|_{L^3(\T^n)}\,\text{d}t.
\end{align*}
Using that $(c^N(t))_{N\in\mathbb{N}}\subseteq L^4(\T^n)$ and $c(t)\in L^4(\T^n)$ are uniformly bounded for all $t\in[0,T]$, the strong convergence $c^N\rightarrow c$ in $C^0([0,T];L^3(\T^n))$ implies that the right-hand side vanishes as $N\rightarrow\infty$. This shows $h_N\rightarrow h$ in $L^1(\T^n\times(0,T))$ for $N\rightarrow\infty$. In particular, this implies 
\begin{align*}
    \int_0^T\int_{\T^n}f^\prime(c^N)w_i\xi \,\text{d}x\,\text{d}t \rightarrow \int_0^T\int_{\T^n}f^\prime(c)w_i\xi \,\text{d}x\,\text{d}t
\end{align*}
for $N\rightarrow\infty$. Finally, we obtain 
\begin{align*}
    \int_0^T\int_{\T^n}\partial_tcw_i\xi\,\text{d}x\,\text{d}t + \int_0^T\int_{\T^n}\Lc_\eta cw_i\xi\,\text{d}x\,\text{d}t + \int_0^T\int_{\T^n}f^\prime(c)w_i\xi \,\text{d}x\,\text{d}t = 0
\end{align*}
for all $i\in\mathbb{N}$ and all $\xi\in C^\infty_0(0,T)$. Since $\text{span}\{w_i
:\,i \in \mathbb{N}\}$ is dense in $H^1(\T^n)$ we obtain
\begin{align*}
    \int_0^T\int_{\T^n}\partial_tcw\xi\,\text{d}x\,\text{d}t + \int_0^T\int_{\T^n}\Lc_\eta cw\xi\,\text{d}x\,\text{d}t + \int_0^T\int_{\T^n}f^\prime(c)w\xi \,\text{d}x\,\text{d}t = 0
\end{align*}
for all $w\in H^1(\T^n)$ and all $\xi\in C^\infty_0(0,T)$. Therefore, the Fundamental Lemma of Calculus of Variations implies that \eqref{eq_ACnonloc_1} holds for $c$.

Finally, we show that the initial condition holds. We have already seen that $c^N(0) \rightarrow c_0$ in $H^1(\T^n) \hookrightarrow L^2(\T^n)$ for $N\rightarrow\infty$. Since we also have the convergence $c^N\rightarrow c$ in $C^0([0,T];L^2(\T^n))$, we conclude that $c(0) = c_0$ in $L^2(\T^n)$ for $N\rightarrow\infty$.\\
\newline
5. \textbf{Higher order estimates.} In this part we prove higher regularity for the solution obtained in the steps before provided that the initial value satisfies $c_0\in H^3(\T^n)$ and $|c_0|\leq R_0$ with $R_0\geq 1$ as in \ref{ass_3}. In the end we prove that the solution is confined to $[-R_0,R_0]$ in this situation and hence we can change the potential outside of this interval suitably. Therefore from now on we assume without loss of generality that the potential $f$ and its derivatives are uniformly bounded.  

We first prove that $c\in L^\infty(0,T;H^2(\T^n))$. For the proof, we again use the Galerkin scheme introduced in step 1. We test equation \eqref{eq_approx_system} by $\Delta^2 c^N$, which means, on the Galerkin level, that we multiply \eqref{eq_approx_system} by $\lambda_i^2g_i^N(t)$ and sum over all $i=1,\ldots,N$. This yields 
\begin{align*}
    \frac{1}{2}\frac{d}{dt}\|\Delta c^N\|_{L^2(\T^n)}^2 + \frac{1}{4}\int_{\T^n}\int_{\T^n}J_\eta(x-y)\big|\Delta c^N(x,t) - \Delta c^N(y,t)\big|^2\,\text{d}y\,\text{d}x \\
    + \int_{\T^n}\Delta f^\prime(c^N)\Delta c^N\,\text{d}x=0.
\end{align*}
For the third term on the right-hand side, we observe
\begin{align*}
    \int_{\T^n}\Delta f^\prime(c^N)\Delta c^N\,\text{d}x &=  \int_{\T^n} f^{\prime\prime\prime}(c^N)|\nabla c^N|^2\Delta c^N\,\text{d}x + \int_{\T^n}f^{\prime\prime}(c^N)|\Delta c^N|^2\,\text{d}x \\
    &\geq \int_{\T^n} f^{\prime\prime\prime}(c^N)|\nabla c^N|^2\Delta c^N\,\text{d}x - \alpha\|\Delta c^N\|_{L^2(\T^n)}^2.
\end{align*}
By our assumption on the potential $f$, it holds $\|f^{(k)}\|_{L^\infty(\R)}\leq C$ for all $k\geq 3$. Thus, the remaining term can be controlled as follows:
\begin{align*}
    \left|\int_{\T^n} f'''(c^N)|\nabla c^N|^2\Delta c^N\,\text{d}x \right|\leq \|f'''\|_\infty\|\nabla c^N\|_{L^4(\T^n)}^2\|\Delta c^N\|_{L^2(\T^n)}.
\end{align*}
In order to control $\|\nabla c^N\|_{L^4(\T^n)}$, note that by \eqref{eq_approx_system} 
\[
0=P_N(\partial_tc^N+\Lc_\eta c^N+f'(c^N))=\partial_tc^N+\Lc_\eta c^N+P_N(f'(c^N)),
\]
where we used that $P_N$ and $\Lc_\eta$ commutes due to the representation \eqref{eq_projection} and properties of convolutions.
We differentiate the latter equation with respect to the space variable and test by $P_N(|\nabla c^N|^{p-2}\nabla c^N)$, $p>2$. This yields
\begin{align}
    \int_{\T^n}\partial_t \nabla c^N \cdot P_N(|\nabla c^N|^{p-2}\nabla c^N)\,\text{d}x + \int_{\T^n}\mathcal{L}_\eta \nabla c^N \cdot|\nabla c^N|^{p-2}\nabla c^N\,\text{d}x \nonumber\\
    +\int_{\T^n}f''(c^N)\nabla c^N \cdot P_N(|\nabla c^N|^{p-2}\nabla c^N)\,\text{d}x = 0, \label{calc_higher_reg_gradient}
\end{align}
where we used $\Lc_\eta \nabla c^N=\Lc_\eta P_N\nabla c^N=P_N\Lc_\eta\nabla c^N$ for the second term. This follows from the fact that convolution operators commute. For the first term, we use $\partial_t \nabla c^N \in (V_N)^n$ to obtain
\begin{align*}
    \int_{\T^n}\partial_t \nabla c^N \cdot P_N(|\nabla c^N|^{p-2}\nabla c^N)\,\text{d}x = \frac{d}{dt}\int_{\T^n}\frac{1}{p}|\nabla c^N|^p\,\text{d}x.
\end{align*}
For the term involving the nonlocal operator, we use a symmetry argument to conclude
\begin{align*}
    &\int_{\T^n}\mathcal{L}_\eta \nabla c^N \cdot |\nabla c^N|^{p-2}\nabla c^N\,\text{d}x \\
    &= \int_{\T^n}\int_{\T^n}J_\eta(x-y)\big(\nabla c^N(x)-\nabla c^N(y)\big)\cdot \nabla b(\nabla c^N(x))\,\text{d}y\,\text{d}x \\
    &=\frac{1}{2}\int_{\T^n}\int_{\T^n}J_\eta(x-y)\big(\nabla c^N(x)-\nabla c^N(y)\big)\cdot \left(\nabla b(\nabla c^N(x)) - \nabla b(\nabla c^N(y))\right)\,\text{d}y\,\text{d}x,
\end{align*}
where the function $b$ is defined as $b(z) := \frac{|z|^p}{p}$ for all $z\in\R^n$. Since $\nabla b$ is monotone and since $J_\eta$ is nonnegative, this implies that 
\begin{align*}
    \int_{\T^n}\mathcal{L}_\eta \nabla c^N \cdot |\nabla c^N|^{p-2}\nabla c^N\,\text{d}x \geq 0.
\end{align*}
Finally, for the last term in \eqref{calc_higher_reg_gradient}, we use Hölder's inequality to get
\begin{align*}
    \left|\int_{\T^n}f^{\prime\prime}(c^N)\nabla c^N \cdot P_N(|\nabla c^N|^{p-2}\nabla c^N)\,\text{d}x\right|
    \leq C\|\nabla c^N\|_{L^p(\T^n)}\|P_N(|\nabla c^N|^{p-2}\nabla c^N)\|_{L^{p^\prime}(\T^n)}.
\end{align*}
From Fourier Analysis and \eqref{eq_projection} it is well-known that $\sup_{N\in\N}\|P_N\|_{\mathcal{L}(L^{p^\prime}(\T^n))}\leq K$ for every $1<p<\infty$, where $K>0$ is independent of $N\in\N$. Moreover, we observe that $\||\nabla c^N|^{p-2}\nabla c^N\|_{L^{p^\prime}(\T^n)} = \|\nabla c^N\|_{L^p(\T^n)}^{p-1}$.
Hence \eqref{calc_higher_reg_gradient} yields
\begin{align*}
    \frac{d}{dt}\int_{\T^n}\frac{1}{p}|\nabla c^N|^p\,\text{d}x 
    +  \int_{\T^n}\Lc_\eta\nabla c^N\cdot\nabla b(\nabla c^N)\,\text{d}x \leq C\|\nabla c^N\|_{L^p(\T^n)}^p.
\end{align*}
Using the relation $c^N(0) = P_N c_0$ and \eqref{eq_projection}, we conclude that the sequence $(c^N(0))_{N\in\N}$ is bounded in $H^3(\T^n)$ provided that $c_0\in H^3(\T^n)$.

Thus by Gronwall's inequality the sequence $(\nabla c^N)_{N\in\mathbb{N}} \subseteq L^\infty(0,T;L^p(\T^n)^n)$ is bounded for all $p>2$. Therefore it holds 
\begin{align*}
    \int_{\T^n} f^{\prime\prime\prime}(c^N)|\nabla c^N|^2\Delta c^N\,\text{d}x \leq C(1+\|\Delta c^N\|_{L^2(\T^n)}^2)
\end{align*}
and thus, again by Gronwall's inequality, $(c^N)_{N\in\mathbb{N}}\subseteq L^\infty(0,T;H^2(\T^n))$ is bounded since the second order derivatives can be controlled with the Laplacian via integration by parts.

In the next step, we prove $c\in L^\infty(0,T;H^3(\T^n))$. To this end, we test equation \eqref{eq_approx_system} by $\Delta^3c^N$. More precisely, we multiply \eqref{eq_approx_system} by $\lambda_ig_i^N(t)$ and sum over all $i=1,\ldots,N$. This yields
\begin{align*}
    \frac{1}{2}\frac{d}{dt}\|\nabla\Delta c^N\|_{L^2(\T^n)^n}^2 + \frac{1}{4}\int_{\T^n}\int_{\T^n}J_\eta(x-y)\big|\nabla\Delta c^N(t,x) - \nabla\Delta c^N(t,y)\big|^2\,\text{d}y\text{d}x \\
    + \int_{\T^n}\nabla\Delta f^\prime(c^N)\cdot\nabla\Delta c^N\,\text{d}x=0.
\end{align*}
We need to control the last term on the right-hand side. First of all, the chain rule gives
\begin{equation}
\begin{aligned}\label{f^3_chain_rule}
    \nabla\Delta f^\prime(c^N) = &f^{(4)}(c^N)|\nabla c^N|^2\nabla c^N + 2f^{\prime\prime\prime}(c^N)D^2c^N\nabla c^N \\
    &+ f^{\prime\prime\prime}(c^N)\Delta c^N\nabla c^N + f^{\prime\prime}(c^N)\nabla\Delta c^N.
\end{aligned}
\end{equation}
In order to estimate the terms we use that we assumed without loss of generality that the derivatives of $f$ are uniformly bounded. Moreover,
\begin{align*}
    \left\| |\nabla c^N|^2\nabla c^N\right\|_{L^2(\T^n)^n} 
    &=\|\nabla c^N\|_{L^6(\T^n)^n}^3 \leq C, \\
    \left\|D^2c^N\nabla c^N\right\|_{L^2(\T^n)^n} 
    &\leq \|D^2 c^N\|_{L^2(\T^n)^{n\times n}}\|\nabla c^N\|_{L^\infty(\T^n)^n}  
    \leq C\|c^N\|_{H^2(\T^n)}\|c^N\|_{H^3(\T^n)} \\
    &\leq C\|c^N\|_{H^2(\T^n)}\left(1+\|\nabla\Delta c^N\|_{L^2(\T^n)^n}\right), \\
    \left \|\Delta c^N\nabla c^N\right\|_{L^2(\T^n)^n} &\leq 
    C\|c^N\|_{H^2(\T^n)}\left(1+\|\nabla\Delta c^N\|_{L^2(\T^n)^n}\right).
\end{align*}
Therefore we can control the corresponding parts in the integral above using Young's inequality.
For the last term, we use the growth condition of $f$ from \ref{ass_3}, which implies
\begin{align*}
    \int_{\T^n}f^{\prime\prime}(c^N)|\nabla\Delta c^N|^2\,\text{d}x \geq -\alpha\|\nabla\Delta c^N\|_{L^2(\T^n)^n}^2.
\end{align*}
Altogether, Gronwall shows that $(c^N)_{N\in\mathbb{N}}\subseteq L^\infty(0,T;H^3(\T^n))$ is bounded. 

From the previous estimates we obtain $c\in H^1(0,T;L^2(\T^n))\cap L^\infty(0,T;H^3(\T^n))$. Now, due to the Aubin-Lions-Simon Lemma, we get
\begin{align*}
    H^1(0,T;L^2(\T^n))\cap L^\infty(0,T;H^3(\T^n)) \hookrightarrow C^0([0,T];H^2(\T^n)),
\end{align*}
which in particular yields $c\in C^0([0,T],C^0(\T^n))$. Finally, from the equation \eqref{eq_ACnonloc_1} for $c$
we obtain $c\in C^1([0,T];C^0(\T^n))$.\\
\newline
6. \textbf{Boundedness.} Finally, we prove that the solution $c\in C^1([0,T];C^0(\T^n))$ of \eqref{eq_ACnonloc_1}--\eqref{eq_ACnonloc_2} is bounded by $R_0$ provided that $|c_0|\leq R_0$, where $R_0\geq 1$ is from \ref{ass_3}. Assume that there exists $(x_0,t_0)\in \T^n\times[0,T]$ such that $c$ attains its maximum in $(x_0,t_0)$ with $c(x_0,t_0) > R_0$. Then 
$\partial_tc(x_0,t_0)\geq 0$ and it holds
    \begin{align*}
        \partial_tc(x_0,t_0) + \Lc_\eta c(x_0,t_0) + f^\prime(c(x_0,t_0)) >0,
    \end{align*}
    since $f^\prime(c(x_0,t_0))>0$ if $c(x_0,t_0) > R_0$ and 
    \begin{align*}
        \mathcal{L}_\eta c(x_0,t_0) = \int_{\T^n}J_\eta(x_0-y)(c(x_0,t_0)-c(y,t_0))\,dy \geq 0,
    \end{align*}
    since $J_\eta\geq 0$.
    This is a contradiction since $c$ solves \eqref{eq_ACnonloc_1}. 
    Analogously, one shows $c \geq -R_0$. 
    \newline
    
    This concludes the proof of Theorem \ref{th_existence}.
\end{proof}

\subsection{Uniform $H^3$-Bounds}\label{sec_nonlocAC_bound}
\begin{Theorem}[\textbf{Uniform $H^3$-Bounds}]\label{th_H^3_estimate}
    Let $T>0$ be fixed. For any $\eps\in(0,1)$, let $c_\eps$ be the solution to \eqref{eq_ACnonloc_1}-\eqref{eq_ACnonloc_2} from Theorem \ref{th_existence} for initial values $c_{0,\varepsilon}\in H^3(\T^n)$ such that $|c_{0,\eps}|\leq R_0$ with $R_0\geq 1$ as in \ref{ass_3} and such that with some constant $K>0$ independent of $\eps\in(0,1)$
    \[
    \|c_{0,\eps}\|_{H^1(\T^n)}\leq\frac{K}{\eps^2},\quad \|c_{0,\eps}\|_{H^2(\T^n)}\leq \frac{K}{\eps^6}\quad\text{ and }\quad \|c_{0,\eps}\|_{H^3(\T^n)}\leq\frac{K}{\eps^{16}}.
    \]
    Then there exist constants $\eps_0\in(0,1)$, $c,C>0$ independent of $c_\eps,\eps,\eta$ such that for all $0<\eta\leq c\eps^4$, 
    $\eps\in(0,\eps_0)$ it holds
    \[
    \|c_\eps\|_{L^\infty(0,T;H^3(\T^n))} \leq \frac{C}{\eps^{16}}.
    \]
\end{Theorem}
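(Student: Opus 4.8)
I would derive every estimate within the Galerkin scheme used in the proof of Theorem~\ref{th_existence}: since the Laplacian is diagonal on $V_N$ and $\Lc_\eta$ is a convolution operator, the functions $-\Delta c^N_\eps$, $\Delta^2 c^N_\eps$, $\Delta^3 c^N_\eps$ are admissible test functions, the bounds obtained are uniform in $N$, and they survive the limit by weak-$*$ lower semicontinuity. Because $|c_\eps|\le R_0$, one modifies $f$ outside $[-R_0,R_0]$ so that $f$ and all its derivatives become bounded; in particular $\|c_\eps\|_{L^2(\T^n)}\le C$. The single nonstandard tool is the nonlocal Ehrling inequality (Theorem~\ref{th_nonloc_ehrling}): for a spatial derivative $w$ of $c_\eps$ (of order $\le 2$) and any $\beta>0$, choosing the radius $R\sim\eps^{-1}\sqrt\beta$ — which is $\le 1/\eta$ once $\eta\le c\eps^4$ and $\eps$ is small — gives
\[
\frac{\beta}{\eps^2}\,\|w\|_{L^2(\T^n)}^2 \;\le\; \tfrac12\,\Ec_\eta(w)+\frac{C\beta^2}{\eps^4}\,\|w\|_{H^{-1}(\T^n)}^2 .
\]
This is what lets the stiff term $\tfrac1{\eps^2}f''(c_\eps)$ be absorbed into the good term $\Ec_\eta$ \emph{without} a Gronwall constant of size $e^{c/\eps^2}$ — which is essential, since such a constant would ruin any polynomial-in-$\eps$ bound.

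\textbf{The $H^1$-, $H^2$-, $H^3$-estimates.} Testing \eqref{eq_ACnonloc_1} with $-\Delta c_\eps$ gives $\tfrac12\tfrac{d}{dt}\|\nabla c_\eps\|_{L^2(\T^n)}^2+\Ec_\eta(\nabla c_\eps)+\tfrac1{\eps^2}\int_{\T^n}f''(c_\eps)|\nabla c_\eps|^2\,dx=0$; using $f''\ge-\alpha$, the displayed inequality with $w=\nabla c_\eps$ (so $\|w\|_{H^{-1}}\le\|c_\eps\|_{L^2}\le C$), and integrating in time against $\|c_{0,\eps}\|_{H^1(\T^n)}\le K\eps^{-2}$, yields $\|\nabla c_\eps\|_{L^\infty(0,T;L^2)}\le C\eps^{-2}$ and the time-integrated bound $\int_0^T\Ec_\eta(\nabla c_\eps)\,dt\le C\eps^{-4}$. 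Testing next with $\Delta^2 c_\eps$, then with $\Delta^3 c_\eps$, and expanding $\Delta f'(c_\eps)$, resp.\ $\nabla\Delta f'(c_\eps)$ (see \eqref{f^3_chain_rule}), produces analogous identities: the top-order part $\tfrac1{\eps^2}\int f''(c_\eps)|w|^2$ with $w=\Delta c_\eps$, resp.\ $w=\nabla\Delta c_\eps$, is absorbed exactly as above, now with $\|w\|_{H^{-1}}$ controlled by the bound from the previous level, while all remaining contributions are products of strictly lower-order derivatives of $c_\eps$ carrying a coefficient $O(\eps^{-2})$, which I would estimate by Hölder's and the Gagliardo--Nirenberg inequality in terms of the norms already controlled (and, where useful, the time-integrated $\Ec_\eta$-bounds). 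Integrating in time and feeding in $\|c_{0,\eps}\|_{H^2(\T^n)}\le K\eps^{-6}$, and finally $\|c_{0,\eps}\|_{H^3(\T^n)}\le K\eps^{-16}$, to initialise the respective integrals, gives an $L^\infty(0,T;H^2)$-bound and then the claimed $\|c_\eps\|_{L^\infty(0,T;H^3)}\le C\eps^{-16}$. It is convenient to supplement this with the time-differentiated estimate — differentiating \eqref{eq_ACnonloc_1} in time, testing with $\partial_t c_\eps$, and using $\|\partial_t c_\eps(0)\|_{L^2}\le\|\Lc_\eta c_{0,\eps}\|_{L^2}+C\eps^{-2}\le C\|\Delta c_{0,\eps}\|_{L^2}+C\eps^{-2}$ together with the uniform bound $\|\Lc_\eta c_\eps\|_{H^{-1}}\le C\|\nabla c_\eps\|_{L^2}$ — combined with the identity $\Delta c_\eps=\partial_t c_\eps+\tfrac1{\eps^2}f'(c_\eps)+(\Lc_\eta c_\eps+\Delta c_\eps)$, the last term being $O(\eta\|c_\eps\|_{H^3})$ in $L^2$ by Theorem~\ref{th_AbelsHurm}, to trade spatial for temporal regularity.

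\textbf{The main obstacle.} The delicate case is $n=3$: there the Gagliardo--Nirenberg exponents make some lower-order terms \emph{super-quadratic} in the top-order norm being estimated — e.g.\ at the $H^2$-level $\tfrac1{\eps^2}\int f'''(c_\eps)|\nabla c_\eps|^2\Delta c_\eps \lesssim\eps^{-2}\|\nabla c_\eps\|_{L^2}^{1/2}\|\Delta c_\eps\|_{L^2}^{5/2}$, and similarly at the $H^3$-level — so a direct Gronwall estimate blows up at a time that shrinks with $\eps$. I would close this by a continuity (bootstrap) argument on the Galerkin level (where $c^N_\eps$ is $C^1$ in time): on the maximal interval where the top-order norm stays below the target bound, one pulls out one factor of that norm, replaces it by the target bound, and absorbs the remaining quadratic factor into $\Ec_\eta$ of the top-order derivative via one more application of Theorem~\ref{th_nonloc_ehrling} — now with a larger, $\eps$-dependent radius — the resulting remainder being controlled by the already-established lower-order norms; the bound put in must then be recovered with room to spare. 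Two points must be checked: (i) the largest Ehrling radius that appears (at the $H^3$-level, in $n=3$) is of order $\eps^{-4}$, so the hypothesis $\eta\le c\eps^4$ is precisely the admissibility condition $\eta\le 1/R$; and (ii) the chain of $\eps$-powers produced by the interpolation and Young steps is internally consistent and collapses, together with the prescribed initial orders $\eps^{-2},\eps^{-6},\eps^{-16}$, to the single final bound $C\eps^{-16}$. For $n=2$ none of this is needed: every remainder is at most quadratic in the top-order norm with an already-bounded coefficient, and one integrates directly.
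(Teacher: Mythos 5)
Your overall skeleton coincides with the paper's: three successive energy estimates obtained by testing with $-\Delta c_\eps$, $\Delta^2 c_\eps$ and $-\Delta^3 c_\eps$, with the stiff term $\eps^{-2}f''$ absorbed at each level by the nonlocal Ehrling inequality at radius $R\sim\eps^{-1},\eps^{-2},\eps^{-4}$ respectively (so that $\eta\le c\eps^4$ is exactly the admissibility condition at the top level, as you correctly observe), and with $\|w\|_{H^{-1}}$ controlled by the $L^2$-norm of the derivative one order lower. The time-differentiated estimate you append is not needed and is best dropped: the step $\|\Lc_\eta c_\eps+\Delta c_\eps\|_{L^2}=O(\eta\|c_\eps\|_{H^3})$ invokes Theorem \ref{th_AbelsHurm} with precisely the $H^3$-norm you are in the middle of bounding.

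The genuine divergence is in your ``main obstacle''. The super-quadratic term $\eps^{-2}\|\nabla c_\eps\|_{L^2}^{1/2}\|\Delta c_\eps\|_{L^2}^{5/2}$ only arises because you interpolate $\|\nabla c_\eps\|_{L^4}$ between $H^1$ and $H^2$. The paper instead uses the maximum principle $|c_\eps|\le R_0$ (already established in Theorem \ref{th_existence}) together with the Gagliardo--Nirenberg inequality $\|\nabla c_\eps\|_{L^4}\le C\|c_\eps\|_{L^\infty}^{1/2}\|c_\eps\|_{H^2}^{1/2}$, which is valid for both $n=2$ and $n=3$; then $\eps^{-2}\int f'''(c_\eps)|\nabla c_\eps|^2\Delta c_\eps$ is at most quadratic in $\|\Delta c_\eps\|_{L^2}$ with coefficient a fixed power of $\eps^{-1}$, and the same device ($\|\nabla c_\eps\|_{L^6}\le C\|c_\eps\|_{H^2}$, $\|\nabla c_\eps\|_{L^\infty}\le C\|c_\eps\|_{H^2}^{1/2}\|c_\eps\|_{H^3}^{1/2}$, etc.) keeps every term at the $H^3$-level quadratic in $\|\nabla\Delta c_\eps\|_{L^2}$ after Young. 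Consequently no continuity/bootstrap argument is needed in either dimension. Your proposed bootstrap is not only an unnecessary detour, it is also left unverified at the decisive point: you explicitly defer checking that ``the chain of $\eps$-powers \ldots collapses to $C\eps^{-16}$'', yet along your super-quadratic route this is doubtful. For instance, at the $H^2$-level, pulling one factor $\|\Delta c_\eps\|_{L^2}^{1/2}\le C\eps^{-3}$ out of $\eps^{-2}\|\nabla c_\eps\|_{L^2}^{1/2}\|\Delta c_\eps\|_{L^2}^{5/2}$ leaves a coefficient $\sim\eps^{-6}$ in front of $\|\Delta c_\eps\|_{L^2}^2$, forcing an Ehrling radius $R\sim\eps^{-3}$ and a remainder $\eps^{-6}R^2\|\nabla c_\eps\|_{L^2}^2\sim\eps^{-16}$, i.e.\ $\|\Delta c_\eps\|_{L^2}\lesssim\eps^{-8}$ rather than the $\eps^{-6}$ the bootstrap assumed — so the bound is not recovered ``with room to spare'' and the argument does not close with the stated hypotheses. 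Since the exponent bookkeeping is the entire quantitative content of the theorem, this is a real gap; adopting the $L^\infty$-interpolation removes it.
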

\begin{proof}
Let $c_\eps$ be the solution of \eqref{eq_ACnonloc_1}-\eqref{eq_ACnonloc_2} according to Theorem \ref{th_existence}. We have to control the $H^3$-norm of $c_\eps$ in dependence of $\eps$. To this end, we proceed in several steps.
\newline
\newline
1. \textbf{$H^1$-estimate.} We test \eqref{eq_ACnonloc_1} with $-\Delta c_\eps$. After integration by parts, we obtain for all $t\in[0,T]$
\begin{align*}
    \frac{1}{2}\|\nabla c_\eps(t)\|_{L^2(\T^n)^n}^2 + \int_0^t\Ec_\eta(\nabla c_\eps)\,ds \leq \frac{1}{2}\|\nabla c_{0,\eps}\|_{L^2(\T^n)^n}^2+\frac{\alpha}{\eps^2}\int_0^t\|\nabla c_\eps\|_{L^2(\T^n)^n}^2\,ds
\end{align*}
where we also used the condition for $f''$ from \ref{ass_3}. Next, we apply Theorem \ref{th_nonloc_ehrling} (Nonlocal Ehrling lemma) which yields for all $R\geq 1$, $0<\eta\leq\frac{1}{R}$ that
\begin{align*}
    &\frac{1}{2}\|\nabla c_\eps(t)\|_{L^2(\T^n)^n}^2 + \int_0^t\Ec_\eta(\nabla c_\eps)\,ds\\ 
    &\leq 
    \frac{1}{2}\|\nabla c_{0,\eps}\|_{L^2(\T^n)^n}^2
    +\int_0^t\frac{C\alpha}{\eps^2 R^2}\Ec_\eta(\nabla c_\eps) + \frac{C\alpha}{\eps^2}R^2\|c_\eps\|_{L^2(\T^n)}^2\,ds. 
\end{align*}
We choose $R=2\frac{\sqrt{C\alpha}}{\eps}$ and $\eps$ small such that $R\geq 1$. Thus the Gronwall Lemma yields for all $0<\eta\leq\frac{1}{2\sqrt{C\alpha}}\eps$ and $\eps$ small that
$\|\nabla c_\eps\|_{L^\infty(0,T;L^2(\T^n)^n)} \leq \frac{C}{\eps^2}$.\\
\newline
2. \textbf{$H^2$-estimate.} We test \eqref{eq_ACnonloc_1} with $\Delta^2 c_\eps$. This yields for all $t\in[0,T]$
\begin{align*}
    &\frac{1}{2}\|\Delta c_\eps(t)\|_{L^2(\T^n)}^2 + \int_0^t\Ec_\eta(\Delta c_\eps)\,ds \\
    &\leq \frac{1}{2}\|\Delta c_{0,\eps}\|_{L^2(\T^n)}^2
    +\frac{\alpha}{\eps^2}\int_0^t\|\Delta c_\eps\|_{L^2(\T^n)}^2\,ds - \frac{1}{\eps^2}\int_0^t\int_{\T^n}f^{\prime\prime\prime}(c_\eps)|\nabla c_\eps|^2\Delta c_\eps\,dx\,ds.
\end{align*}
Because of the maximum principle in Theorem \ref{th_existence}, we have that $|c_\eps|$ is uniformly bounded. Moreover, the Gagliardo-Nirenberg interpolation inequality gives for all $t\in[0,T]$
\begin{align*}
    \|\nabla c_\eps\|_{L^4(\T^n)^n} \leq C\|c_\eps\|_{L^\infty(\T^n)}^{1/2}\|c_\eps\|_{H^2(\T^n)}^{1/2} \leq C\|c_\eps\|_{H^2(\T^n)}^{1/2}
\end{align*}
for both $n\in\{2,3\}$, where we omit the time-dependence for simplicity. Then by Young's inequality
\begin{align*}
    \frac{1}{\eps^2}\int_{\T^n}f^{\prime\prime\prime}(c_\eps)|\nabla c_\eps|^2\Delta c_\eps\,\text{d}x 
    &\leq \frac{C}{\eps^2}\|\nabla c_\eps\|_{L^4(\T^n)^n}^4 + \frac{C}{\eps^2}\|\Delta c_\eps\|_{L^2(\T^n)}^2 \\ 
    &\leq \frac{C}{\eps^2}\|c_\eps\|_{H^2(\T^n)}^2 + \frac{C}{\eps^2}\|\Delta c_\eps\|_{L^2(\T^n)}^2 \\
    &\leq   \frac{C}{\eps^4} + \frac{C}{\eps^2}\|\Delta c_\eps\|_{L^2(\T^n)}^2 
\end{align*}
due to the estimate we derived in step 1. Together with Theorem \ref{th_nonloc_ehrling} (Nonlocal Ehrling Inequality), we get for all $R\geq1,0<\eta\leq\frac{1}{R}$ and all $t\in[0,T]$ 
\begin{align*}
    \frac{1}{2}\|\Delta c_\eps(t)\|_{L^2(\T^n)}^2 + \int_0^t\Ec_\eta(\Delta c_\eps)\,ds
    &\leq \frac{C}{\eps^4} + \frac{C}{\eps^4}\int_0^t\|\Delta c_\eps\|_{L^2(\T^n)}^2\,ds\\
    &\leq \frac{C}{\eps^4}
    +\int_0^t\frac{C}{\eps^4R^2}\Ec_\eta(\Delta c_\eps) + \frac{C}{\eps^4}R^2\|\nabla c_\eps\|_{L^2(\T^n)^n}^2\,ds.
\end{align*}
Choosing $R$ proportional to $\frac{1}{\eps^2}$ to absorb the $\Ec_\eta$-term, Gronwall's inequality then implies
\[
    \|\Delta c_\eps\|_{L^\infty(0,T;L^2(\T^n))} \leq \frac{C}{\eps^6}
\]
for all $0<\eta\leq c\eps^2$ and $\eps>0$ small, where $c>0$ is some fixed constant. This also controls the $L^2(\T^n)$-norm of the second derivatives due to integration by parts.\\
\newline
3. \textbf{$H^3$-estimate. } In the final step, we test \eqref{eq_ACnonloc_1} with $-\Delta^3 c_\eps$. Then we have for a.e.~$t\in[0,T]$
\begin{align*}
    &\frac{1}{2}\|\nabla\Delta c_\eps(t)\|_{L^2(\T^n)^n}^2
    -\frac{1}{2}\|\nabla\Delta c_{0,\eps}\|_{L^2(\T^n)^n}^2\\
    &+\int_0^t\Ec_\eta(\nabla\Delta c_\eps)\,ds + \frac{1}{\eps^2}\int_0^t\int_{\T^n}\nabla\Delta f'(c_\eps)\cdot\nabla\Delta c_\eps\,dx\,ds=0.
\end{align*}
For simplicity, we often omit the time-dependence in the following. The estimates are uniform with respect to time. Using the chain rule as in \eqref{f^3_chain_rule} and that $c_\eps$ is uniformly bounded, we obtain
\begin{align*}
    \big\|f^{(4)}(c_\eps)|\nabla c_\eps|^2\nabla c_\eps\big\|_{L^2(\T^n)^n} 
    \leq C\||\nabla c_\eps|^2\nabla c_\eps\|_{L^2(\T^n)^n} \leq C\|\nabla c_\eps\|_{L^6(\T^n)^n}^3. 
\end{align*}
In particular, this gives
\begin{align*}
    \left|\frac{1}{\eps^2}\int_{\T^n}f^{(4)}(c_\eps)|\nabla c_\eps|^2\nabla c_\eps\cdot\nabla\Delta c_\eps\,\text{d}x\right| &\leq \frac{C}{\eps^2}\|c_\eps\|_{H^2(\T^n)}^3\|\nabla\Delta c_\eps\|_{L^2(\T^n)} \\ 
    &\leq \frac{C}{\eps^{32}} + \frac{C}{\eps^8}\|\nabla\Delta c_\eps\|_{L^2(\T^n)}^2.
\end{align*}
Here, we used the estimates from before as well as the Poincaré inequality since
\begin{align*}
    \overline{\Delta c_\eps} = \int_{\T^n}\Delta c_\eps\,\text{d}x = 0 
\end{align*}
due to the Gauß Divergence Theorem.
For the remaining terms, we have
\begin{align*}
    \big\|f^{\prime\prime\prime}(c_\eps)D^2c_\eps\nabla c_\eps\big\|_{L^2(\T^n)^n} &\leq C\|\Delta c_\eps\|_{L^2(\T^n)}\|\nabla c_\eps\|_{H^2(\T^n)^n} \\
    &\leq \frac{C}{\eps^6}\big(\|\nabla c_\eps\|_{L^2(\T^n)^n} + \|\nabla\Delta c_\eps\|_{L^2(\T^n)^n}\big), \\
    \big\|f^{\prime\prime\prime}(c_\eps)\Delta c_\eps\nabla c_\eps\big\|_{L^2(\T^n)^n} 
    &\leq \frac{C}{\eps^6}\big(\|\nabla c_\eps\|_{L^2(\T^n)^n} + \|\nabla\Delta c_\eps\|_{L^2(\T^n)^n}\big)
\end{align*}
and therefore
\begin{align*}
    &\left|\frac{1}{\eps^2}\int_{\T^n}\Big(f^{\prime\prime\prime}(c_\eps)D^2c_\eps\nabla c_\eps + f^{\prime\prime\prime}(c_\eps)\Delta c_\eps\nabla c_\eps\Big)\cdot\nabla\Delta c_\eps\,\text{d}x \right|\\
    &\leq \frac{C}{\eps^8}\big(\|\nabla c_\eps\|_{L^2(\T^n)^n} + \|\nabla\Delta c_\eps\|_{L^2(\T^n)^n}\big)\|\nabla\Delta c_\eps\|_{L^2(\T^n)^n} \\
    &\leq \frac{C}{\eps^8}\|\nabla\Delta c_\eps\|_{L^2(\T^n)^n}^2 + \frac{C}{\eps^{12}},
\end{align*}
where we applied Young's inequality in the last step. For the last term, \ref{ass_3} implies
\begin{align*}
    \frac{1}{\eps^2}\int_{\T^n}f^{\prime\prime}(c_\eps)|\nabla\Delta c_\eps|^2\,\text{d}x \geq -\frac{\alpha}{\eps^2}\|\nabla\Delta c_\eps\|_{L^2(\T^n)^n}^2.
\end{align*}
Combining these estimates with Theorem \ref{th_nonloc_ehrling} (Nonlocal Ehrling Inequality), we obtain for all $R\geq 1$, $0<\eta\leq \min\{c\eps^2,\frac{1}{R}\}$ and for a.e.~$t\in[0,T]$
\begin{align*}
    \frac{1}{2}\|\nabla\Delta c_\eps(t)\|_{L^2(\T^n)^n}^2 
    +\int_0^t\Ec_\eta(\nabla\Delta c_\eps)\,ds
    &\leq \frac{C}{\eps^{32}}+\frac{C}{\eps^8}\int_0^t\|\nabla\Delta c_\eps\|_{L^2(\T^n)^n}^2\,ds\\
    &\leq \frac{C}{\eps^{32}}+\int_0^t\frac{C}{\eps^8R^2}\Ec_\eta(\nabla\Delta c_\eps) + \frac{C}{\eps^8}R^2\|\Delta c_\eps\|_{L^2(\T^n)}^2\,ds.
\end{align*}
Finally, we choose $R$ proportional to $\frac{1}{\eps^4}$ to absorb the $\Ec_\eta$-term. Then Gronwall's lemma, cf. \cite[Lemma II.4.10]{BoyerFabrie}, yields for all $0<\eta\leq c\eps^4$ and $\eps>0$ small, where the constant $c$ is possibly smaller than before, that
\begin{align*}
    \|\nabla\Delta c_\eps\|_{L^\infty(0,T;L^2(\T^n)^n)} \leq \frac{C}{\eps^{16}}.
\end{align*}
The $L^2$-norm of the third derivatives is then also controlled via integration by parts.
\end{proof}

\section{Proof of Theorem \ref{th_conv} (Convergence)}\label{sec_conv}
We use the notation from Theorem \ref{th_conv}. Moreover, let $c^A_\eps$ for $\eps>0$ small be the approximate solution from the local case, constructed for the evolving hypersurface $(\Gamma_t)_{t\in[0,T_0]}$ and the parameter $M\in\N$, cf.~Theorem \ref{th_local_approx}. Note that $\Gamma$ has to evolve according to mean curvature flow in order to have the remainder estimates for $c^A_\eps$ in Theorem \ref{th_local_approx} for the local Allen-Cahn equation available. For the latter we set $r^A_\eps:=\partial_tc^A_\eps-\Delta c^A_\eps+\frac{1}{\eps^2}f'(c^A_\eps)$. Finally, let $\beta\geq 0$ be fixed (to be chosen later) and set $g_\beta(t):=e^{-\beta t}$ for all $t\geq 0$. We investigate the validity of the estimates
\begin{align}\begin{split}
    \sup_{t\in[0,T]}\|g_\beta\overline{c}_\eps(t)\|_{L^2(\T^n)}^2 + \|g_\beta\nabla\overline{c}_\eps\|_{L^2(\T^n\times(0,T)\setminus\Gamma(\delta))}^2 &\leq 2R^2\eps^{2M+1},\label{eq_conv_proof_estimate}\\
    \|g_\beta\nabla_\tau\overline{c}_\eps\|_{L^2(\T^n\times(0,T)\cap\Gamma(\delta))}^2 + \eps^2 \|g_\beta\partial_n\overline{c}_\eps\|_{L^2(\T^n\times(0,T)\cap\Gamma(\delta))}^2 &\leq 2R^2\eps^{2M+1},\end{split}
\end{align}
where $\eps,R,M>0$ and $T\in(0,T_0]$. Now let us define
\[
T_{\eps,\beta,R}:=\sup\{T\in(0,T_0]:\eqref{eq_conv_proof_estimate}\text{ holds for }\eps,\beta,R\}.
\]
For the different cases from Theorem \ref{th_conv} we have that $T_{\eps,\beta,R}$ is well-defined and positive due to continuity. For the three cases we need to show the following:
\begin{enumerate}
    \item Let $M\geq 3$. There are $\beta,\eps_1,c>0$ such that if $\eta=\eta(\eps)\leq cR\eps^{16+M+\frac{1}{2}}$, then it holds $T_{\eps,\beta,R}=T_0$ for all $\eps\in(0,\eps_1]$.
    \item Let $M\geq 3$. The analogous statement as in the first case holds when $R$ is small and $M$ in the estimates is replaced by $2$.
    \item Let $M=2$. The analogous statement as in the first case holds for $\beta=0$ and small time $T$.
\end{enumerate}

In the following we first carry out a general computation that works for every case. Taking the difference of the nonlocal Allen-Cahn equation \eqref{eq_ACnonloc_1} for $c_\eps$ and the local Allen-Cahn equation \eqref{eq_ACloc_1} for $c^A_\eps$ with remainder $r^A_\eps$, we obtain
\begin{align}\label{eq_conv_proof_equ}
\partial_t\overline{c}_\eps -\Delta\overline{c}_\eps +f''(c^A_\eps)\overline{c}_\eps +\Lc_\eta c_\eps + \Delta c_\eps = -r^A_\eps - r_\eps(c_\eps,c^A_\eps), 
\end{align}
where we have set $r_\eps(c_\eps,c^A_\eps):=\frac{1}{\eps^2}\left[f'(c_\eps)-f'(c^A_\eps)-f''(c^A_\eps)\overline{c}_\eps\right]$. Testing \eqref{eq_conv_proof_equ} with $g_\beta^2\overline{c}_\eps$ and integrating over $\T^n\times(0,T)$ for $T\in(0,T_{\eps,\beta,R,M}]$ yields
\begin{align*}
&\frac{1}{2}g_\beta^2(T)^2\|\overline{c}_\eps(T)\|_{L^2(\T^n)}^2 
-\frac{1}{2}\|\overline{c}_\eps(0)\|_{L^2(\T^n)}^2 
+\beta\int_0^T g_\beta^2\|\overline{c}_\eps\|_{L^2(\T^n)}^2\,\text{d}t\\
&+\int_0^T g_\beta^2 \int_{\T^n}|\nabla\overline{c}_\eps|^2 + \frac{1}{\eps^2} f''(c^A_\eps)|\overline{c}_\eps|^2\,\text{d}x\,\text{d}t+\int_0^T g_\beta^2\int_{\T^n}(\Lc_\eta c_\eps+\Delta c_\eps)\overline{c}_\eps\,\text{d}x\,\text{d}t\\
&=-\int_0^T g_\beta^2 \int_{\T^n} r^A_\eps\overline{c}_\eps+r_\eps(c_\eps,c^A_\eps)\overline{c}_\eps\,\text{d}x\,\text{d}t,
\end{align*}
where we used $\frac{d}{dt}g_\beta=-\beta g_\beta$. We have $\frac{1}{2}\|\overline{c}_\eps(0)\|_{L^2(\T^n)}^2\leq\frac{1}{2}R^2\eps^{2M+1}$ due to the assumption in the theorem. Moreover, the spectral estimate for the local case from Theorem \ref{th_local_spectral} yields
\begin{align*}
&\int_0^T g_\beta^2 \int_{\T^n}|\nabla\overline{c}_\eps|^2 + \frac{1}{\eps^2} f''(c^A_\eps)|\overline{c}_\eps|^2\,\text{d}x\,\text{d}t \\
&\geq -\overline{C}\int_0^T g_\beta^2\|\overline{c}_\eps\|_{L^2(\T^n)}^2\,\text{d}t
+\int_0^T g_\beta^2 \left(\|\nabla\overline{c}_\eps\|_{L^2(\T^n\setminus\Gamma_t(\delta))}^2+\|\nabla_\tau\overline{c}_\eps\|_{L^2(\Gamma_t(\delta))}^2\right)\text{d}t.
\end{align*}
Furthermore, we use the estimate $\|\Lc_\eta c_\eps + \Delta c_\eps\|_{L^2(\T^n)}\leq C\eta \|c_\eps\|_{H^3(\T^n)}$ from Theorem \ref{th_AbelsHurm} and the uniform $H^3$-estimate for $c_\eps$ from Theorem \ref{th_H^3_estimate} for $0<\eta\leq c\eps^4$. This yields
\begin{align*}
&\left|\int_0^T g_\beta^2\int_{\T^n}(\Lc_\eta c_\eps+\Delta c_\eps)\overline{c}_\eps\,\text{d}x\,\text{d}t\right|
\leq \int_0^T g_\beta^2 C\eta\eps^{-16}\|\overline{c}_\eps\|_{L^2(\T^n)}\,\text{d}t\\
&\leq\frac{\beta}{2}\int_0^T g_\beta^2\|\overline{c}_\eps\|_{L^2(\T^n)}^2\,\text{d}t 
+C\|g_\beta\|_{L^2(0,T)}^2\,(\eta\eps^{-16})^2.
\end{align*}
Moreover, using the remainder estimate for $r^A_\eps$ from Theorem \ref{th_local_approx} and an integral transformation in tubular neighbourhood coordinates yields
\[
\left|\int_0^T g_\beta^2\int_{\T^n} r^A_\eps\overline{c}_\eps\,\text{d}x\,\text{d}t\right| 
\leq C\int_0^T g_\beta^2 \eps^{M+\frac{1}{2}} \|\overline{c}_\eps\|_{L^2(\T^n)} \leq \overline{C}_1 \|g_\beta\|_{L^1(0,T)} R\eps^{2M+1}, 
\]
where we used \eqref{eq_conv_proof_estimate}. Additionally, we use the uniform boundedness of $c_\eps$ and $c^A_\eps$ from Theorem \ref{th_existence} and Theorem \ref{th_local_approx} to estimate with Taylor
\[
\left|\int_0^T g_\beta^2 r_\eps(c_\eps,c^A_\eps)\overline{c}_\eps\,\text{d}t\right|\leq \frac{C}{\eps^2}\int_0^T g_\beta^2 \|\overline{c}_\eps\|_{L^3(\T^n)}^3\,\text{d}t.
\]
The latter can be estimated by splitting $\T^n$ into $\T^n\setminus\Gamma(\delta)$ and $\Gamma(\delta)$, using tubular neighbourhood coordinates for the latter set as well as Gagliardo-Nirenberg estimates, cf.~Moser \cite{MoserACvACND}, Lemma 6.6. This implies
\[
\left|\int_0^T g_\beta^2 r_\eps(c_\eps,c^A_\eps)\overline{c}_\eps\,\text{d}t\right|
\leq C R^3 \eps^{2M+1} \eps^{M-2} \|g_\beta^{-1}\|_{L^\frac{4}{4-n}(0,T)}.
\]
Finally, we control the $\partial_n\overline{c}_\eps$-term in \eqref{eq_conv_proof_estimate} by the spectral term via
\begin{align*}
    \eps^2\|g_\beta\partial_n\overline{c}_\eps\|_{L^2(\T^n\times(0,T)\cap\Gamma(\delta))} ^2 
    \leq  C&\eps^2 \int_0^T g_\beta^2 \int_{\T^n}|\nabla\overline{c}_\eps|^2 + \frac{1}{\eps^2} f''(c^A_\eps)\overline{c}_\eps ^2\,\text{d}x\,\text{d}t\\
    &+ C\int_0^T g_\beta^2 \|\overline{c}_\eps\|_{L^2(\T^n)}^2\,\text{d}t,
\end{align*}
where we used that $c^A_\eps$ is uniformly bounded with respect to small $\eps$. The first term on the right hand side is absorbed by half of the spectral term for $\eps$ small. Altogether this yields
\begin{align}\begin{split}
&\frac{1}{2}g_\beta(T)^2\|\overline{c}_\eps(T)\|_{L^2(\T^n)}^2 
+ \frac{1}{2}\|g_\beta\nabla\overline{c}_\eps\|_{L^2(\T^n\times(0,T)\setminus\Gamma(\delta))}^2 \\
&+\frac{1}{2} \|g_\beta\nabla_\tau\overline{c}_\eps\|_{L^2(\T^n\times(0,T)\cap\Gamma(\delta))}^2 
+\frac{\eps^2}{2} \|g_\beta\partial_n \overline{c}_\eps\|_{L^2(\T^n\times(0,T)\cap\Gamma(\delta))}^2 \\
&\leq \frac{R^2}{2}\eps^{2M+1} 
+\int_0^T g_\beta^2(-\frac{\beta}{2}+\overline{C}_0)\|\overline{c}_\eps\|_{L^2(\T^n)}^2\,\text{d}t
+C\|g_\beta\|_{L^2(0,T)}^2(\eta\eps^{-16})^2\\
&+\overline{C}_1 \|g_\beta\|_{L^1(0,T)} R\eps^{2M+1}
+C R^3 \eps^{2M+1} \eps^{M-2} \|g_\beta^{-1}\|_{L^\frac{4}{4-n}(0,T)} \label{eq_conv_proof_estimate2}\end{split}
\end{align}
for all $T\in(0,T_{\eps,\beta,R}]$ and $\eps\in(0,\tilde{\eps}_0]$, where $\tilde{\eps}_0>0$ is small (independent of $\beta, R, T$).

Now we consider the distinct cases in the theorem.\\
\newline
\textit{Ad 1.} Let $M\geq 3$. We choose $\beta\geq 2\overline{C}_0$ and large such that $\frac{\overline{C}_1}{\beta}\leq\frac{R}{8}$. We now roughly estimate $\|g_\beta\|_{L^2(0,T)}^2\leq T_0^2$ for this case. Then if $\eta\leq cR\eps^{16+M+\frac{1}{2}}$ for some $c>0$ small, then the right hand side in \eqref{eq_conv_proof_estimate2} is estimated by $\frac{3}{4}R^2\eps^{2M+1}$ for all $T\in(0,T_{\eps,\beta,R}]$ and $\eps\in(0,\eps_1]$, where $\eps_1>0$ is small (depending on $R,\beta$). Finally, a contradiction and continuity argument shows $T_{\eps,\beta,R}=T_0$ for all $\eps\in(0,\eps_1]$.\\
\newline
\textit{Ad 2.} Let $M=2$. Then we set $\beta=0$ and obtain for $\eta\leq cR\eps^{16+M+\frac{1}{2}}$ with \eqref{eq_conv_proof_estimate} that the right hand side in \eqref{eq_conv_proof_estimate2} is estimated by
\[
\left[\frac{R^2}{2} + \overline{C}_0 T + CR^2T^2+\overline{C}_1 TR+CR^3 T^{\frac{4-n}{4}}\right] \eps^{2M+1}.
\]
Therefore one can argue analogously to the other cases by taking time $T\leq T_1$ and $T_1$ small uniformly with respect to $\eps$.\\

This shows Theorem \ref{th_conv}.\hfill$\square$\\
\newline
\textit{Acknowledgments.} C.~Hurm was partially supported  by the Graduiertenkolleg 2339 \textit{IntComSin} of the Deutsche Forschungsgemeinschaft  (DFG, German Research Foundation) -- Project-ID 321821685. M.~Moser has received funding from the European Research Council (ERC) under the European Union’s Horizon 2020 research and innovation programme (grant agreement No 948819).

\setcounter{secnumdepth}{0}

\makeatletter
\renewenvironment{thebibliography}[1]
{\section{\bibname}
	\@mkboth{\MakeUppercase\bibname}{\MakeUppercase\bibname}%
	\list{\@biblabel{\@arabic\c@enumiv}}%
	{\settowidth\labelwidth{\@biblabel{#1}}%
		\leftmargin\labelwidth
		\advance\leftmargin\labelsep
		\@openbib@code
		\usecounter{enumiv}%
		\let\p@enumiv\@empty
		\renewcommand\theenumiv{\@arabic\c@enumiv}}%
	\sloppy
	\clubpenalty4000
	\@clubpenalty \clubpenalty
	\widowpenalty4000%
	\sfcode`\.\@m}
{\def\@noitemerr
	{\@latex@warning{Empty `thebibliography' environment}}%
	\endlist}
\makeatother

\footnotesize

\bibliographystyle{siam}

~\\
\textit{(H.~Abels) Fakultät für Mathematik, Universität Regensburg, Universitätsstraße 31, D-93053 Regensburg, Germany \\ 
E-mail address:} \textsf{helmut.abels@mathematik.uni-regensburg.de}\\
\newline
\textit{(C.~Hurm) Fakultät für Mathematik, Universität Regensburg, Universitätsstraße 31, D-93053 Regensburg, Germany \\ 
E-mail address:} \textsf{christoph.hurm@mathematik.uni-regensburg.de}\\
\newline
\textit{(M.~Moser) Institute of Science and Technology Austria, Am Campus 1, AT-3400 Klosterneuburg\\
E-mail address:} \textsf{maximilian.moser@ist.ac.at}

\end{document}